\newcommand{\bpartial}{\bar{\partial}}
\renewcommand{\a}{\alpha}
\newcommand{\p}{\partial}
\newtheorem{theorem}{Theorem}[section]
\newtheorem{lemma}[theorem]{Lemma}
\newtheorem{proposition}[theorem]{Proposition}
\newtheorem{corollary}[theorem]{Corollary}
 \theoremstyle{definition}
\theoremstyle{remark}
\newtheorem{remark}[theorem]{Remark}
\numberwithin{equation}{section}
\begin{document}
\setlength{\baselineskip}{1.2\baselineskip}

\title[Generalized Complex Monge-Amp\`ere Type Equations]
{Generalized complex Monge-Amp\`ere type equations\\
    on closed Hermitian manifolds}

\author{Wei Sun}

\address{Shanghai Center for Mathematical Sciences, Fudan University}
\email{sunwei\_math@fudan.edu.cn}

\begin{abstract}
We study generalized complex Monge-Amp\`ere type equations on closed Hermitian manifolds. We derive {\em a priori} estimates and then prove the existence of solutions. Moreover, the gradient estimate is improved.

%

\bigskip
\noindent
Mathematics Subject Classification.  58J05, 32W20, 53C55

\end{abstract}

\maketitle

\section{Introduction}
\label{introduction}
\setcounter{equation}{0}
\medskip

Let $(M, \omega)$ be a compact Hermitian manifold of complex dimension $n\geq 2$ and $\chi$ another Hermitian metric on $M$. In local coordinate charts, we shall write
\(
	\omega = \sqrt{-1} \sum_{i,j} g_{i\bar j} dz^i \wedge d\bar z^j
\)
and
\(
	\chi = \sqrt{-1} \sum_{i,j} \chi_{i\bar j} dz^i \wedge d\bar z^j.
\)
For a real $C^2 (M)$ function $u$, we will use the notation $\chi_u = \chi + \sqrt{-1} \p\bpartial u$.

For a smooth positive real function $\psi$ on $M$, we are concerned with the generalized complex Monge-Amp\`ere type equation
\begin{equation}
\label{int-gcmate-equation}
	\left\{
	\begin{aligned}
		\chi^n_u &= \psi \sum^n_{\a = 1} c_\a \chi^{n - \a}_u \wedge \omega^\a , \\
		\chi_u &> 0, \qquad \sup_M u = 0,
	\end{aligned}
	\right.
\end{equation}
where $c_\a$'s are  nonnegative real constants, and $\sum^n_{\a = 1} c_\a > 0$. Following \cite{SW08, FLM11, GSun12}, we set
$
	[\chi] = \{ \chi_u : u \in C^2 (M) \} 
$,
and assume that there is $\chi' \in [\chi]$ satisfying $\chi' > 0$ and
\begin{equation}
\label{int-gcmate-cone-condition}
	n \chi'^{n - 1} > \psi \sum^{n - 1}_{\a = 1} c_\a (n - \a) \chi'^{n - \a - 1} \wedge \omega^\a.
\end{equation}
By an appropriate approximation as in~\cite{Sun2013e}, a smooth $\chi'$ is available. Without loss of generality, we may assume $\chi = \chi'$ throughout this paper. If $\chi$ and $\omega$ are both K\"ahler and $\psi$ is a constant, $\psi$ is uniquely determined by
\begin{equation}
\label{int-kahler-constant}
	\psi = c: = \frac{\int_M \chi^n}{\sum^n_{\a = 1} c_\a \int_M \chi^{n - \a} \wedge \omega^\a} .
\end{equation}

The generalized complex Monge-Amp\`ere type equation is an extension of  Donaldson's equation~\cite{Donaldson99a}. In fact, Donaldson's equation corresponds to the case that $c_2 = \cdots = c_n = 0$ and $\psi$ is constant,
\begin{equation}
\label{int-Donaldson-equation}
	\chi^n_u = \psi \chi^{n - 1}_u \wedge \omega.
\end{equation}
Chen~\cite{Chen00b} also found the equation when he studied the lower bound of Mabuchi energy. The equation is studied by Chen~\cite{Chen00b, Chen04}, Weinkove~\cite{Weinkove04, Weinkove06}, Song and Weinkove~\cite{SW08} by $J$-flow. Their results were extended to the complex Monge-Amp\`ere type equations by Fang, Lai and Ma~\cite{FLM11} also using parabolic flow method. More general cases were treated by Guan and the author~\cite{GSun12}, the author~\cite{Sun2013e, Sun2014e} on Hermitian manifolds using continuity method. Later, the author~\cite{Sun2013p} reproduced the results in \cite{Sun2013e} by parabolic flow method. In these works, the cone condition analogous to condition~\eqref{int-gcmate-cone-condition} is sufficient and necessary. Moreover, a similar equation was studied by Pingali~\cite{Pingali14} on a flat complex 3-torus.

Fang, Lai and Ma~\cite{FLM11} stated a conjecture for the solvability of \eqref{int-gcmate-equation} under condition~\eqref{int-gcmate-cone-condition}. Admitting the famous work of Yau~\cite{Yau78}, Collins and Sz\'ekelyhidi~\cite{CollinsSzekelyhidi2014a} proved the conjecture by continuity method starting from the complex Monge-Amp\`ere equation.
For Donaldson's equation on toric manifolds, the result was used to verify a numerical criterion for the existence of a cone condition, which was proposed by  Lejmi and Sz\'ekelyhidi~\cite{LejmiSzekelyhidi13}.
In this paper, we shall adopt piecewise continuity method due to the author~\cite{Sun2013e} to prove general results without using the solvability results by Yau~\cite{Yau78}, Cherrier~\cite{Cherrier87}, Tosatti and Weinkove~\cite{TWv10a,TWv10b}.

In order to prove the solvability of generalized complex Monge-Amp\`ere type equations, we shall first obtain {\em a priori} estimates.
\begin{theorem}
\label{theorem-estimate}
Let $(M,\omega)$ be a closed Hermitian manifold of complex dimension $n$ and $u$ be a smooth solution to the equation~\eqref{int-gcmate-equation}. Suppose that
\begin{equation}
\label{int-gcmate-cone-condition-1}
	n \chi^{n - 1} > \psi \sum^{n - 1}_{\a = 1} c_\a (n - \a) \chi^{n - \a - 1} \wedge \omega^\a .
\end{equation}
Then there are uniform $C^\infty$ {\em a priori} estimates of $u$.
\end{theorem}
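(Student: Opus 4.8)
The plan is to run the usual bootstrap — a $C^{0}$ bound, a Hessian bound controlled by the gradient, the gradient bound, uniform ellipticity, and then Evans--Krylov and Schauder — organized around the observation that the cone condition~\eqref{int-gcmate-cone-condition-1} is exactly the statement that $u\equiv 0$ is a $\mathcal C$-subsolution of~\eqref{int-gcmate-equation} in the sense of Sz\'ekelyhidi and Collins--Sz\'ekelyhidi~\cite{CollinsSzekelyhidi2014a}. First I would recast the equation: diagonalizing $\chi_u$ with respect to $\omega$ and writing $\lambda=\lambda(\chi_u)$ for the eigenvalues, \eqref{int-gcmate-equation} reads $\sigma_n(\lambda)=\psi\sum_\alpha c_\alpha\binom{n}{\alpha}^{-1}\sigma_{n-\alpha}(\lambda)$, equivalently $F(\lambda):=-\sum_\alpha c_\alpha\binom{n}{\alpha}^{-1}\sigma_{n-\alpha}(\lambda)/\sigma_n(\lambda)=-1/\psi$. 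On the positive orthant $\Gamma_n$ each quotient $\sigma_{n-\alpha}/\sigma_n=\bigl[(\sigma_n/\sigma_{n-\alpha})^{1/\alpha}\bigr]^{-\alpha}$ is convex, being a negative power of the concave operator $(\sigma_n/\sigma_{n-\alpha})^{1/\alpha}$; hence $F$ is concave, strictly increasing in each $\lambda_i$, with range $(-\infty,0)$ and $F\equiv-\infty$ on $\p\Gamma_n$. Differentiating $\sigma_n-\psi\sum_\alpha c_\alpha\binom{n}{\alpha}^{-1}\sigma_{n-\alpha}$ in a single eigenvalue direction and translating back to forms shows that the boundedness of $(\lambda(\chi)+\Gamma_n)\cap\{F=-1/\psi\}$ at every point of $M$ is precisely~\eqref{int-gcmate-cone-condition-1}, so $0$ is a $\mathcal C$-subsolution.

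Granting this, the $C^{0}$ bound $\sup_M|u|\le C$ follows from an Alexandrov--Bakelman--Pucci argument: if $\inf_M u$ were very negative, one localizes near its minimum, applies ABP to $u+\varepsilon|z-z_0|^2$, and uses the equation together with the $\mathcal C$-subsolution property to bound from below the measure of the image of the normal map, contradicting the bound coming from $\sup_M u=0$; the Hermitian torsion contributes only harmless lower-order terms. Next comes the Hessian estimate: applying the maximum principle to a quantity of the form $\lambda_{\max}(\chi_u)\,e^{\varphi(|\nabla u|^2)+\phi(u)}$, using the concavity of $F$, the positivity of the linearized operator $F^{i\bar j}$, and the $\mathcal C$-subsolution inequality to absorb the troublesome second- and third-order terms (as well as the torsion and curvature terms of the Hermitian structure), one obtains $\sup_M|\p\bpartial u|\le C\bigl(1+\sup_M|\nabla u|^2\bigr)$. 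Once the gradient is controlled this gives $|\p\bpartial u|\le C$, hence $\lambda_i\le C$; and then the equation itself forces a lower bound, since with $\alpha_0=\min\{\alpha:c_\alpha>0\}$ one has $\sigma_{\alpha_0}(\lambda^{-1})=\sigma_{n-\alpha_0}(\lambda)/\sigma_n(\lambda)\le C$, which together with $\lambda_i\le C$ yields $\lambda_i\ge 1/C$. Thus \eqref{int-gcmate-equation} is uniformly elliptic and, in the form $F=-1/\psi$, concave, so the Evans--Krylov theorem gives a $C^{2,\beta}$ bound, and differentiating the equation and iterating Schauder estimates yields the uniform $C^\infty$ bounds.

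The heart of the matter is therefore the gradient estimate, and this is where I expect the main obstacle. I would attempt it directly by the maximum principle applied to $\log|\nabla u|^2_\omega+\phi(u)$, with $\phi'$ large and $\phi''$ tuned to absorb the zeroth-order terms, again exploiting the $\mathcal C$-subsolution structure (as in the Hessian estimate) to dominate the third-order terms produced by differentiating the equation; this direct argument is the improvement over the earlier approach, in that it keeps the constant effective and avoids a limiting equation. If some term resists this, the fallback is the blow-up/Liouville scheme: were $\sup_M|\nabla u|$ unbounded along a sequence of solutions, one rescales near the maxima, uses $|\p\bpartial u|\le C(1+|\nabla u|^2)$ to keep the rescalings bounded in $C^{1,1}_{\mathrm{loc}}(\mathbf C^n)$, and observes that the limit is a bounded entire solution of the limiting equation whose triviality — again a consequence of~\eqref{int-gcmate-cone-condition-1} surviving in the limit — contradicts $|\nabla u|=1$ at the origin. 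Either way the first-order estimate is the step where the cone condition is indispensable, and the Hermitian torsion adds the usual technical overhead throughout.
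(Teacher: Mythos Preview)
Your proposal is correct and shares the key structural observation with the paper --- that the cone condition~\eqref{int-gcmate-cone-condition-1} is exactly the $\mathcal C$-subsolution property for $u\equiv 0$ (this is the content of the paper's Lemma~\ref{lemma-alternative}) --- but the bootstrap is organized differently. For $C^0$ the paper does Moser iteration \`a la Tosatti--Weinkove rather than ABP, though it explicitly notes that the Sz\'ekelyhidi route you describe is an alternative. The more substantive difference is at $C^2$: the paper applies the maximum principle to $\ln(\Delta_\omega u+\mathrm{tr}_\omega\chi)+\phi(u)$ with $\phi(u)=-Au+\frac{1}{u-\inf u+1}$ (a Phong--Sturm device) and obtains $w\le Ce^{A(u-\inf u)}$, i.e.\ a second-order bound that depends only on $\|u\|_{C^0}$ and \emph{not} on $|\nabla u|$. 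Consequently the gradient is not the bottleneck in the paper's scheme --- it can already be read off from $C^0$, $C^2$ and elliptic estimates for $\Delta$ --- and the direct gradient estimate (with the same $\phi$) is presented as a refinement rather than a logically necessary step. Your Hou--Ma--Wu style inequality $|\partial\bar\partial u|\le C(1+|\nabla u|^2)$ is also valid here and feeds either your direct argument or the blow-up/Liouville route, but it makes the gradient genuinely load-bearing; the paper's organization sidesteps that dependence. Both routes are sound; the paper's buys a cleaner logical flow and a slightly sharper constant in the Hessian, while yours plugs directly into the general $\mathcal C$-subsolution machinery and would generalize with less modification.
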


In this paper, we shall first prove the uniform estimate and partial second order estimates. The gradient estimate follows from these estimates and elliptic estimates for the Laplacian, while it can also be obtained by combining the interior gradient estimate  and the uniform estimate. Adapting the approach of Phong and Sturm~\cite{PhongSturm10, PhongSturm12} and Blocki~\cite{Blocki09a}, we shall improve the gradient estimate due to Guan and the author~\cite{GSun12}. Higher order estimates can be achieved by Evans-Krylov theory~\cite{Evans82, Krylov82,TWWvY2014} and Schauder estimate.

For general Hermitian manifolds, we have the following result.
\begin{theorem}
\label{theorem-hermitian}
Let $(M,\omega)$ be a closed Hermitian manifold of complex dimension $n$ and $\chi$ also a Hermitian metric. Suppose that
\begin{equation*}
	n \chi^{n - 1} > \psi \sum^{n - 1}_{\a = 1} c_\a (n - \a) \chi^{n - \a - 1} \wedge \omega^\a .
\end{equation*}
If there is a $C^2$ function $v$ satisfying $\chi_v > 0$ and
\begin{equation*}
	\chi^n_v \leq \psi \sum^n_{\a = 1} c_\a \chi^{n - \a}_v \wedge \omega^\a,
\end{equation*}
then there exist a unique solution $u$ and a unique constant $b$ such that
\begin{equation}
\label{int-gcmate-equation-actual}
	\left\{
	\begin{aligned}
		\chi^n_u &= e^b \psi \sum^n_{\a = 1} c_\a \chi^{n - \a}_u \wedge \omega^\a , \\
		\chi_u &> 0, \qquad \sup_M u = 0 .
	\end{aligned}
	\right.
\end{equation}

\end{theorem}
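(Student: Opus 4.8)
The plan is to prove existence by a continuity method anchored at the subsolution $v$, feeding Theorem~\ref{theorem-estimate} for the a priori bounds, and to obtain uniqueness from a comparison argument together with the strong maximum principle.

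\textbf{Set-up.} Write $F(\chi_u) = \chi^n_u \big/ \sum_\a c_\a \chi^{n-\a}_u\wedge\omega^\a$, so the equation reads $F(\chi_u) = e^b\psi$; recall that $F$ is monotone increasing on positive $(1,1)$-forms and that the linearization of $\log F$ at a positive form is positive definite with no lower-order terms. After a routine mollification we may assume $v\in C^\infty$: replacing $v$ by mollifications $v_j\to v$ in $C^2$ one gets $\chi_{v_j}>0$ and $F(\chi_{v_j})\le\psi+\delta_j$ with $\delta_j\to0^+$, and since \eqref{int-gcmate-cone-condition-1} is a strict, hence open, condition it survives the replacement $\psi\mapsto\psi+\delta_j$; solving for the data $(\psi+\delta_j,v_j)$ and letting $j\to\infty$ (the estimates below are uniform in $j$) recovers the $C^2$ case. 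Assume then $v$ smooth with $\sup_M v=0$, and for $t\in[0,1]$ set $\psi_t=(1-t)F(\chi_v)+t\psi$, a smooth positive function with $F(\chi_v)\le\psi_t\le\psi$. Consider
\[
(E_t):\qquad \chi^n_u = e^b\,\psi_t\sum_{\a=1}^n c_\a\,\chi^{n-\a}_u\wedge\omega^\a,\qquad \chi_u>0,\qquad \sup_M u=0.
\]
Then $(u,b)=(v,0)$ solves $(E_0)$ and $(E_1)$ is the target equation, so it suffices to show that $S=\{t:(E_t)\text{ is solvable}\}$ is nonempty, open and closed in $[0,1]$.

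\textbf{A priori bounds and closedness.} Let $(u_t,b_t)$ solve $(E_t)$. At a maximum point of $u_t-v$ we have $\sqrt{-1}\p\bpartial(u_t-v)\le0$, hence $\chi_{u_t}\le\chi_v$ and $F(\chi_{u_t})\le F(\chi_v)$ there, so $e^{b_t}\psi_t\le F(\chi_v)$ at that point; since $\psi_t\ge F(\chi_v)$ pointwise, this forces $e^{b_t}\le1$. At a minimum point of $u_t$ we get $\chi_{u_t}\ge\chi$, hence $e^{b_t}\psi_t\ge F(\chi)\ge\min_M F(\chi)>0$, so $e^{b_t}$ is bounded below by a data-dependent positive constant. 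Because $e^{b_t}\psi_t\le\psi$, the effective right-hand side of $(E_t)$ satisfies $n\chi^{n-1}>e^{b_t}\psi_t\sum_\a c_\a(n-\a)\chi^{n-\a-1}\wedge\omega^\a$ with a gap bounded below, uniformly in $t$, by that of \eqref{int-gcmate-cone-condition-1}. Hence Theorem~\ref{theorem-estimate} applies to each $(E_t)$ and yields $C^\infty$ a priori estimates for $u_t$ that are uniform in $t$; together with the uniform bound on $b_t$, this gives closedness of $S$ (along $t_k\to t_*$ extract $C^\infty$-convergent $u_{t_k}$ and convergent $b_{t_k}$; the limit solves $(E_{t_*})$, with $\chi_{u_{t_*}}>0$ from the uniform lower bound on $\chi_{u_t}$ contained in the $C^2$ estimate, and $\sup_M u_{t_*}=0$). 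This step — keeping Theorem~\ref{theorem-estimate} applicable along the whole path, which rests precisely on the bound $e^{b_t}\le1$, the only point where the subsolution hypothesis $\chi^n_v\le\psi\sum_\a c_\a\chi^{n-\a}_v\wedge\omega^\a$ enters, together with the uniformity of all constants over $t$ (and over the mollifications) — is the technical heart of the argument.

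\textbf{Openness.} View $\Phi_t(u,b)=\log F(\chi_u)-b-\log\psi_t$ as a map $C^{2,\gamma}(M)\times\mathbb{R}\to C^{\gamma}(M)$; its linearization at a solution is $(w,\beta)\mapsto a^{i\bar j}w_{i\bar j}-\beta$ with $(a^{i\bar j})>0$ and no lower-order terms. On the closed manifold $M$ the operator $a^{i\bar j}\p_i\p_{\bar j}$ has kernel equal to the constants and one-dimensional cokernel (the range being functions with vanishing integral against a fixed positive density), and the parameter $\beta$ exactly fills that cokernel; so, after normalizing by $\sup_M w=0$ — which only reselects the representative and leaves $\chi_w$ unchanged — the linearization is an isomorphism. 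The implicit function theorem then makes $S$ open, and $C^{2,\gamma}$ solutions are smooth by elliptic regularity. Therefore $S=[0,1]$, $(E_1)$ is solvable, and undoing the mollification produces a solution $(u,b)$ of \eqref{int-gcmate-equation-actual}.

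\textbf{Uniqueness.} If $(u_1,b_1)$ and $(u_2,b_2)$ both solve \eqref{int-gcmate-equation-actual}, then at a maximum point of $u_1-u_2$ we have $\chi_{u_1}\le\chi_{u_2}$, whence $e^{b_1}\psi=F(\chi_{u_1})\le F(\chi_{u_2})=e^{b_2}\psi$ and $b_1\le b_2$; by symmetry $b_1=b_2=:b$. Then $w=u_1-u_2$ satisfies $\tilde a^{i\bar j}w_{i\bar j}=0$, where $\tilde a^{i\bar j}$ is the average over $s\in[0,1]$ of the (positive definite) linearizations of $\log F$ along the positive segment $(1-s)\chi_{u_2}+s\chi_{u_1}$; by the strong maximum principle $w$ is constant, and $\sup_M u_1=\sup_M u_2=0$ forces $u_1=u_2$. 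This proves the theorem.
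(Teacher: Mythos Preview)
Your proof is correct and follows essentially the same route as the paper: a continuity method anchored at the subsolution $v$, using the maximum principle comparison with $v$ to force $b_t\le 0$ so that the effective right-hand side stays $\le\psi$ and Theorem~\ref{theorem-estimate} applies uniformly along the path. The only cosmetic differences are that the paper interpolates geometrically via $\psi^t\varphi^{1-t}$ (with $\varphi=F(\chi_v)$) rather than your arithmetic $(1-t)\varphi+t\psi$, and the paper defers openness, uniqueness, and the lower bound on $b_t$ to \cite{Sun2013e}, whereas you spell these out.
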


A corollary immediately follows from Theorem~\ref{theorem-hermitian} and the fact that the inequality~\eqref{int-gcmate-cone-condition} does not contain the $\omega^n$ term.
\begin{corollary}
\label{corollary-hermitian}
Let $(M,\omega)$ be a closed Hermitian manifold of complex dimension $n$ and $\chi$ also a Hermitian metric. Suppose that
\begin{equation*}
	n \chi^{n - 1} > \psi \sum^{n - 1}_{\a = 1} c_\a (n - \a) \chi^{n - \a - 1} \wedge \omega^\a .
\end{equation*}
Then there is a constant $K\geq 0$ such that if $c_n \geq K$ there exists a unique solution $u$ and a unique constant satisfying the equation~\eqref{int-gcmate-equation-actual}.
\end{corollary}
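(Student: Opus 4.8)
The plan is to deduce the corollary from Theorem~\ref{theorem-hermitian} by producing, for $c_n$ large enough, a $C^2$ subsolution $v$ as required by the hypothesis. The key observation is that the cone condition~\eqref{int-gcmate-cone-condition} involves only the terms $\chi^{n-\a-1}\wedge\omega^\a$ for $\a=1,\dots,n-1$, and in particular does \emph{not} involve $\omega^n$; so increasing $c_n$ does not disturb the hypothesis of Theorem~\ref{theorem-hermitian}. On the other hand, the inequality defining a subsolution, $\chi^n_v \leq \psi\sum^n_{\a=1} c_\a\,\chi^{n-\a}_v\wedge\omega^\a$, \emph{does} contain the term $c_n\psi\,\omega^n$ on the right-hand side, which is strictly positive and grows linearly in $c_n$. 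Thus, for $c_n$ sufficiently large, the subsolution inequality becomes easy to satisfy.

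First I would take $v\equiv 0$, so that $\chi_v = \chi > 0$, which is automatic. The subsolution inequality then reads
\begin{equation*}
	\chi^n \leq \psi \sum^{n-1}_{\a=1} c_\a\,\chi^{n-\a}\wedge\omega^\a + c_n\,\psi\,\omega^n .
\end{equation*}
Working pointwise and choosing at each point coordinates in which $\chi$ and $\omega$ are simultaneously diagonalized (say $\chi = \sqrt{-1}\sum dz^i\wedge d\bar z^i$ and $\omega = \sqrt{-1}\sum \lambda_i\, dz^i\wedge d\bar z^i$ with $\lambda_i>0$), this becomes a scalar inequality of the form $1 \leq \psi\sum_{\a=1}^{n-1} c_\a\, e_\a(\lambda) + c_n\psi\, e_n(\lambda)$, where $e_\a$ denotes the $\a$-th elementary symmetric polynomial in the $\lambda_i$. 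Since $M$ is compact, $\psi$ is a positive function and $\chi,\omega$ are fixed Hermitian metrics, the quantity $\psi\, e_n(\lambda) = \psi\,\omega^n/\chi^n$ is bounded below by a positive constant $\delta > 0$ on all of $M$. Hence there is a constant $K := 1/\delta \geq 0$ such that, whenever $c_n \geq K$, the required inequality holds (the remaining terms being nonnegative), i.e.\ $v\equiv 0$ is a subsolution.

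With such a subsolution in hand, and the cone condition~\eqref{int-gcmate-cone-condition} assumed (it is unaffected by the size of $c_n$), Theorem~\ref{theorem-hermitian} applies directly and yields the unique solution $u$ and unique constant $b$ satisfying~\eqref{int-gcmate-equation-actual}. I do not anticipate a genuine obstacle here; the only point requiring a little care is the uniformity of the lower bound $\delta$ for $\psi\,\omega^n/\chi^n$, which follows from compactness of $M$ and positivity and smoothness of $\psi$, together with checking that $K$ can indeed be taken to be $0$ in the degenerate-looking edge cases (for instance when $\psi\,\omega^n \geq \chi^n$ everywhere already, no constraint on $c_n$ is needed). One could alternatively absorb part of the inequality using the other $c_\a$'s when some are positive, but taking $v\equiv 0$ and using only the $\omega^n$ term is the cleanest route.
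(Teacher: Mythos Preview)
Your proposal is correct and is exactly the argument the paper has in mind: the paper simply states that the corollary follows from Theorem~\ref{theorem-hermitian} together with the observation that the cone condition~\eqref{int-gcmate-cone-condition} does not involve the $\omega^n$ term, and you have supplied the obvious choice $v\equiv 0$ and the compactness argument that makes this work. The side remark about forcing $K=0$ in ``edge cases'' is unnecessary (the statement only asks for some $K\geq 0$), but it does no harm.
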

\begin{remark}
Corollary~\ref{corollary-hermitian} indeed includes the case of complex Monge-Amp\`ere equation, which has a stronger cone condition than all others.
\end{remark}

For K\"ahler manifolds, we have a stronger result.
\begin{theorem}
\label{theorem-kahler}
Let $(M,\omega)$ be a closed K\"ahler manifold of complex dimension $n$ and $\chi$ also K\"ahler. Suppose that
\begin{equation*}
	n \chi^{n - 1} > \psi \sum^{n - 1}_{\a = 1} c_\a (n - \a) \chi^{n - \a - 1} \wedge \omega^\a .
\end{equation*}
If $\psi \geq c$ for all $x \in M$ where $c$ is defined in \eqref{int-kahler-constant}, then there exist a unique solution $u$ and a unique constant $b$  satisfying the equation~\eqref{int-gcmate-equation-actual}.

\end{theorem}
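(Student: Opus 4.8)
The plan is to reduce Theorem~\ref{theorem-kahler} to Theorem~\ref{theorem-hermitian}. In the K\"ahler setting the condition $\psi \geq c$ is exactly what is needed to produce, by solving the constant-coefficient equation, a $C^2$ function $v$ that serves as a subsolution in the sense required by Theorem~\ref{theorem-hermitian}; feeding this $v$ into that theorem gives the conclusion. Put differently, Theorem~\ref{theorem-kahler} is ``stronger'' only in that the abstract subsolution hypothesis of Theorem~\ref{theorem-hermitian} is replaced here by the concrete, checkable inequality $\psi \geq c$.

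First I would observe that the cone condition is monotone in its coefficient. Since $\omega, \chi > 0$, the constants $c_\a$ are nonnegative, and $\sum_\a c_\a > 0$, each $(n-1,n-1)$-form $\chi^{n - \a - 1} \wedge \omega^\a$ with $1 \leq \a \leq n-1$ is nonnegative, so $\psi \geq c$ yields
\[
	n \chi^{n - 1} > \psi \sum^{n - 1}_{\a = 1} c_\a (n - \a) \chi^{n - \a - 1} \wedge \omega^\a \;\geq\; c \sum^{n - 1}_{\a = 1} c_\a (n - \a) \chi^{n - \a - 1} \wedge \omega^\a .
\]
Thus the generalized complex Monge-Amp\`ere type equation with the constant right-hand side $c$ satisfies the cone condition for the given $\chi$. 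On a closed K\"ahler manifold $c$ is the unique admissible constant, cf.~\eqref{int-kahler-constant}, and under this cone condition the equation
\[
	\chi^n_v = c \sum^n_{\a = 1} c_\a \chi^{n - \a}_v \wedge \omega^\a , \qquad \chi_v > 0 ,
\]
is solvable: this is the already-settled case of constant right-hand side, see \cite{GSun12, Sun2013e, CollinsSzekelyhidi2014a}, and it also follows from a continuity method built on the {\em a priori} estimates of Theorem~\ref{theorem-estimate}. Either way we obtain a smooth $v$ with $\chi_v > 0$. This is the only nontrivial input of the argument.

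It then remains to check that $v$ is a subsolution in the sense of Theorem~\ref{theorem-hermitian}. Since $\chi_v, \omega > 0$ and $\sum_\a c_\a > 0$, the $(n,n)$-form $\sum^n_{\a=1} c_\a \chi^{n-\a}_v \wedge \omega^\a$ is strictly positive, so multiplying the pointwise inequality $c \leq \psi$ by it gives
\[
	\chi^n_v = c \sum^n_{\a = 1} c_\a \chi^{n - \a}_v \wedge \omega^\a \;\leq\; \psi \sum^n_{\a = 1} c_\a \chi^{n - \a}_v \wedge \omega^\a .
\]
Together with the cone condition~\eqref{int-gcmate-cone-condition-1}, this is exactly the hypothesis of Theorem~\ref{theorem-hermitian}, which therefore produces a unique solution $u$ and a unique constant $b$ satisfying \eqref{int-gcmate-equation-actual}, as claimed. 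I would expect the comparison principle underlying Theorem~\ref{theorem-hermitian} to give moreover $b \leq 0$, with equality precisely when $\psi \equiv c$ (in which case $v$, after normalization, already solves \eqref{int-gcmate-equation-actual}), and I would include this as a remark if it drops out cleanly. I would also stress that one cannot simply take $v = 0$: the hypothesis $\psi \geq c$ compares $\psi$ only with the cohomological average $c$ of $\chi^n / \sum_\a c_\a \chi^{n-\a}\wedge\omega^\a$, not with its pointwise values, so the subsolution genuinely must be produced by solving an equation.
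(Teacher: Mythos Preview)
Your argument is correct, but it takes a different route from the paper's. You first solve the constant-coefficient equation $\chi_v^n = c\sum_\a c_\a \chi_v^{n-\a}\wedge\omega^\a$ (citing \cite{GSun12,Sun2013e,CollinsSzekelyhidi2014a}) and then feed that $v$ into Theorem~\ref{theorem-hermitian} as a subsolution, using $\psi\ge c$. The paper instead avoids invoking the constant case: it defines $\varphi$ by $\chi^n=\varphi\sum_\a c_\a\chi^{n-\a}\wedge\omega^\a$, picks a smooth $h\ge\max\{\varphi,\psi\}$ still satisfying the cone condition, applies Theorem~\ref{theorem-hermitian} with the trivial subsolution $v=0$ (valid because $h\ge\varphi$) to solve the $h$-equation, and then runs a second continuity from $h$ to $\psi$; the K\"ahler hypothesis and $\psi\ge c$ enter only to show, by integrating the equation over $M$, that the constants $b_t$ stay $\le 0$, whence $\psi^t h^{1-t}e^{b_t}\le h$ and the estimates close.

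Your approach is shorter and conceptually clean, but note that the paper's stated aim (see the introduction) is precisely to prove these results \emph{without} using the solvability theorems of Yau, Cherrier, and Tosatti--Weinkove; since the constant case in \cite{CollinsSzekelyhidi2014a} rests on Yau's theorem, your argument is against that spirit, and in fact the paper's proof recovers the constant case $\psi\equiv c$ as a special case rather than consuming it as input. Also, your parenthetical ``it also follows from a continuity method built on the {\em a priori} estimates of Theorem~\ref{theorem-estimate}'' is not a self-contained alternative: continuity needs a starting solution, and you have not said what it is---supplying one is exactly the content of the constant case.
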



\bigskip

\section{Preliminary}
\label{preliminary}
\setcounter{equation}{0}
\medskip

We denote by $\nabla$ the Chern connection of $g$.
As in \cite{GSun12, Sun2013e}, we express
\begin{equation}
\label{int:definition-X}
X := \chi_u \,,
\end{equation}
and thus
\begin{equation}
\label{int:definition-X-coefficients}
X_{i\bar j} = \chi_{i\bar j} +  \bpartial_j \p_i u\,.
\end{equation}
Also, we denote the coefficients of $X^{-1}$ by $X^{i\bar j}$.
Recall that $S_\a (\lambda)$ denote the $\a$-th elementary symmetric polynomial of $\lambda \in \mathbb{R}^n$,
\begin{equation}
	S_\a (\lambda) = \sum_{1 \leq i_1 < \cdots < i_\a \leq n} \lambda_{i_1} \cdots \lambda_{i_\a} \,.
\end{equation}
Also, $\lambda_* (X)$ is the eigenvalue set of $X$ with respect to $\{g_{i\bar j}\}$ while $\lambda^* (X^{-1})$ is the eigenvalue set of $X^{-1}$ with respect to $\{g^{i\bar j}\}$.
Thus we write $S_\a (X) = S_\a (\lambda_* (X))$ and $S_\a (X^{-1}) = S_\a (\lambda^* (X^{-1}))$. 
For convenience, we shall use $S_\alpha$ to denote $S_\alpha(X^{-1})$. In local coordinates, equation~\eqref{int-gcmate-equation} can be written in the form
\begin{equation}
\label{int-gcmate-equation-equivalent}
    	F(\chi_u) := - \sum^n_{\a = 1} \frac{c_\a}{C^\a_n} S_\a =  - \frac{1}{\psi} ,
\end{equation}
where $C^\a_n  = \frac{n!}{(n - \a)! \a !}$. For $\{i_1,\cdots,i_s\} \subseteq \{1,\cdots,n\}$,
\begin{equation}
 	S_{k;i_1\cdots i_s} (\lambda) = S_k (\lambda|_{\lambda_{i_1} = \cdots = \lambda_{i_s}= 0}).
\end{equation}
By convention, $S_{0;k} = 1$. Then inequality~\eqref{int-gcmate-cone-condition} is equivalent to
\begin{equation}
  	\frac{1}{\psi} > \sum^n_{\a = 1} \frac{c_\alpha}{C^\a_n} S_{\alpha }((\chi|k)^{-1})
\end{equation}
for all $k$, where $(\chi|k)$ is the $(k,k)$-minor matrix of $\chi$ in local charts. Moreover, we may assume
\begin{equation}
    \epsilon\omega \leq \chi \leq \epsilon^{-1} \omega
\end{equation}
for some $\epsilon > 0$.

Define $F^{i\bar j} := \frac{\partial F}{\partial u_{i\bar j} }$. Assume that at the point $p$,  $g_{i\bar j} = \delta_{ij}$ and $X_{i\bar j}$ is diagonal in a specific chart. Thus $F^{i\bar j}$ is also diagonal at $p$ and
\begin{equation}
	F^{i\bar i} = \sum^n_{\alpha = 1} \frac{c_\alpha}{C^\alpha_n} S_{\alpha - 1;i} (X^{i\bar i})^2 .
\end{equation}
By direct calculation,
\begin{equation}
	F^{i\bar i} X_{i\bar i} = \sum^n_{\alpha = 1} \frac{c_\alpha}{C^\alpha_n} S_{\alpha - 1;i} X^{i\bar i} \leq \sum^n_{\alpha = 1} \frac{c_\alpha}{C^\alpha_n} S_{\alpha } = \frac{1}{\psi} ,
\end{equation}
and
\begin{equation}
	\sum_i F^{i\bar i} X_{i\bar i} = \sum^n_{\alpha = 1} \frac{c_\alpha}{C^\alpha_n} \sum_i S_{\alpha - 1;i} X^{i\bar i} = \sum^n_{\alpha = 1} \frac{\alpha c_\alpha}{C^\alpha_n} S_\alpha \in \left[\frac{1}{\psi} , \frac{n}{\psi}\right] .
\end{equation}
Also
\begin{equation}
\begin{aligned}
	\sum_i F^{i\bar i} &= \sum^n_{\alpha = 1} \frac{c_\alpha}{C^\alpha_n} \sum_i S_{\alpha - 1;i} (X^{i\bar i} )^2 \\
	&= \sum^{n - 1}_{\alpha = 1} \frac{c_\alpha}{C^\alpha_n} \Big(S_\alpha S_1 - (\alpha + 1) S_{\alpha + 1} \Big) + c_n S_n S_1 ,
\end{aligned}
\end{equation}
and by generalized Newton-Maclaurin inequalities,
\begin{equation}
	\sum_i F^{i\bar i} 
	= S_1 \sum^n_1 \frac{\alpha c_\alpha}{n C^\alpha_n} S_\alpha
	\geq \frac{S_1}{n \psi} .
\end{equation}

As in \cite{FLM11, GSun12, Sun2013e}, differentiating $S_\a$ twice and applying the strong concavity of $S_\a$ \cite{GLZ}, we have at point $p$,
\begin{equation}
\label{int-formula-S-partial}
\begin{aligned}
	\partial_l (S_\a)  = \; -\sum_i S_{\alpha-1;i} (X^{i\bar i})^2 X_{i\bar il}
\end{aligned}
\end{equation}
and
\begin{equation}
\label{int-formula-S-double-derivative}
\begin{aligned}
	\bar\partial_l\partial_l (S_\a) \geq   \sum_{i,j} S_{\alpha -1;i}  (X^{i\bar i})^2 X^{j\bar j}X_{j\bar i\bar l}X_{i\bar j l} - \sum_i S_{\alpha -1;i}(X^{i\bar i})^2 X_{i\bar il\bar l} .
\end{aligned}
\end{equation}
Therefore
\begin{equation}
\label{formula-equation-partial}
	- \p_l \Big(\frac{1}{\psi}\Big) = \sum_i F^{i\bar i} X_{i\bar il}
\end{equation}
and
\begin{equation}
\label{formula-equation-double-derivative}
	\bpartial_l\p_l \Big(\frac{1}{\psi}\Big) \geq \sum_{i,j} F^{i\bar i} X^{j\bar j} X_{j\bar i\bar l} X_{i\bar j l} - \sum_i F^{i\bar i} X_{i\bar il\bar l} .
\end{equation}

\bigskip

\section{The uniform estimate}
\label{uniform}
\setcounter{equation}{0}
\medskip

In this section, we derive the uniform estimate directly by Moser iteration. However, the $C^2$ estimate in Section~\ref{C2} also implies a uniform estimate as shown in \cite{TWv10a}. Moreover, the uniform estimate also follows from Proposition 10  in \cite{Szekelyhidi2014b}.

\begin{theorem}
\label{gcmate-theorem-uniform}
Let $u$ be a smooth solution to the equation
\begin{equation}
\label{generalized-quotient-equation}
	\left\{
	\begin{aligned}
		&\chi^m_u \wedge \omega^{n - m} = \psi \sum^m_{\a = 1} c_\a \chi^{m - \a}_u \wedge \omega^{n - m + \a}, \\
		&\chi_u \in \Gamma^m_\omega \cap [\chi], \qquad \sup_M u = 0,
	\end{aligned}
	\right.
\end{equation}
where $1 \leq m \leq n$, $\chi \in \Gamma^m_\omega$ is a smooth real $(1,1)$ form, $c_\a$'s are nonnegative real constants and $\sum^m_{\a = 1} c_\a > 0$,  where $\Gamma^m_\omega$ is the set of all the real $(1, 1)$ forms whose eigenvalue set with respect to $\omega$ belong to $m$-positive cone in $\mathbb{R}^n$. If
\begin{equation}
\label{uniform-kahler-cone-condition}
	m \chi^{m - 1} \wedge \omega^{n - m} > \psi \sum^{m - 1}_{\a = 1} c_\a (m - \a) \chi^{m - \a - 1} \wedge \omega^{n - m + \a},
\end{equation}
there is a constant $C$ such that $\sup_M |u| < C$.
\end{theorem}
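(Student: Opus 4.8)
\medskip

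We sketch the argument, which follows the pattern of \cite{Sun2013e} with the modifications forced by the non-K\"ahler setting. Since the normalization $\sup_M u = 0$ supplies the upper bound, it suffices to bound $\sup_M v$ from above, where $v := -u \ge 0$ and $\inf_M v = 0$. First we record a pointwise non-degeneracy: since $\chi_u \in \Gamma^m_\omega$, every $(n,n)$-form $\chi_u^{m-\a}\wedge\omega^{n-m+\a}$, $0 \le \a \le m$, is a nonnegative multiple of $\omega^n$, so choosing $\a_0$ with $c_{\a_0} > 0$ and dropping the other nonnegative terms in \eqref{generalized-quotient-equation} gives $\chi_u^m\wedge\omega^{n-m} \ge \psi\,c_{\a_0}\,\chi_u^{m-\a_0}\wedge\omega^{n-m+\a_0}$. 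With $p_k := \binom{n}{k}^{-1}S_k(\lambda_*(\chi_u))$ this reads $\binom{n}{m}p_m \ge \psi c_{\a_0}\binom{n}{m-\a_0}p_{m-\a_0}$, and since $\lambda_*(\chi_u) \in \Gamma_m$ the Maclaurin inequality $p_{m-\a_0} \ge p_m^{(m-\a_0)/m}$ then forces
\begin{equation*}
	\chi_u^m\wedge\omega^{n-m} \;\ge\; \delta_0\,\omega^n \quad\text{on } M
\end{equation*}
for a uniform $\delta_0 > 0$. Also $\Gamma^m_\omega \subseteq \Gamma^1_\omega$, so $\operatorname{tr}_\omega\chi_u > 0$ and hence $\Delta_\omega u = \operatorname{tr}_\omega\chi_u - \operatorname{tr}_\omega\chi \ge -C$.

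The inequality $\Delta_\omega u \ge -\operatorname{tr}_\omega\chi$ says that $u$ is, up to a fixed smooth function, subharmonic on $(M,\omega)$; together with $\sup_M u = 0$, this gives a uniform $L^1$-bound $\|v\|_{L^1} \le C$ by the sub-mean-value inequality and the $L^1$-compactness of the corresponding family of functions (as in the pluripotential theory of $\omega$-plurisubharmonic functions; alternatively this bound can be extracted from the equation and the non-degeneracy above). It remains to upgrade the $L^1$-bound to an $L^\infty$-bound by a Moser iteration, and this is where the equation and the cone condition are used.

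Fix $p \ge 1$ and test against $v^p$ the equation \eqref{generalized-quotient-equation}, written --- after subtracting the background form $\psi\sum_{\a=1}^m c_\a\chi^{m-\a}\wedge\omega^{n-m+\a} - \chi^m\wedge\omega^{n-m}$ from both sides and using $\chi_u^k - \chi^k = -\sqrt{-1}\,\p\bpartial v\wedge\sum_{j=0}^{k-1}\chi_u^j\wedge\chi^{k-1-j}$ --- in the form $\sqrt{-1}\,\p\bpartial v\wedge W = -R$, with $R$ a fixed $(n,n)$-form and $W$ an $(n-1,n-1)$-form built from $\chi$, $\chi_u$, $\omega$ whose value at $\chi_u = \chi$ is
\begin{equation*}
	\Theta \;:=\; m\chi^{m-1}\wedge\omega^{n-m} - \psi\sum_{\a=1}^{m-1} c_\a(m-\a)\,\chi^{m-\a-1}\wedge\omega^{n-m+\a},
\end{equation*}
which is strictly positive precisely by the cone condition \eqref{uniform-kahler-cone-condition}, so $\Theta \ge c_0\,\omega^{n-1}$ for some $c_0 > 0$. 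An integration by parts produces the coercive term $p\int_M v^{p-1}\,\sqrt{-1}\,\p v\wedge\bpartial v\wedge W$; using the non-degeneracy above, $\epsilon\omega \le \chi \le \epsilon^{-1}\omega$, and Young's inequality to absorb the torsion terms (which appear because $(M,\omega)$ is only Hermitian) and the correction $W - \Theta$, one is led to an inequality of the form $\int_M |\nabla(v^{(p+1)/2})|_\omega^2\,\omega^n \le C(p+1)\int_M v^p\,\omega^n + C(p+1)$. The Sobolev inequality on $(M,\omega)$ then gives $\|v\|_{L^{\gamma p}} \le (Cp)^{a/p}(\|v\|_{L^p} + 1)$ for fixed $\gamma = n/(n-1) > 1$ and $a > 0$, and iterating over $p = \gamma^j$ and invoking the $L^1$-bound yields $\sup_M v \le C(\|v\|_{L^1} + 1) \le C$; that is, $\sup_M|u| \le C$.

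The main obstacle is the coercive estimate in the last step. On a non-K\"ahler $(M,\omega)$ the integration by parts generates torsion terms absent from the classical computation, and --- more seriously --- when $m < n$ the form $\chi_u$ lies only in the $m$-positive cone, so its intermediate powers and the correction $W - \Theta$ need not be sign-definite; organizing the bookkeeping so that the cone condition $\Theta \ge c_0\,\omega^{n-1}$ together with the non-degeneracy $\chi_u^m\wedge\omega^{n-m} \ge \delta_0\,\omega^n$ still deliver the displayed inequality, with constants uniform in $p$, is where the work lies.
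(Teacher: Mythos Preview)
Your overall strategy---Moser iteration from an $L^1$ bound to $L^\infty$---matches the paper's, and your identification of $W$ and of its value $\Theta$ at $\chi_u=\chi$ is correct; indeed $W=\int_0^1\Theta(\chi_{tu})\,dt$ with $\Theta(A):=mA^{m-1}\wedge\omega^{n-m}-\psi\sum_\alpha c_\alpha(m-\alpha)A^{m-\alpha-1}\wedge\omega^{n-m+\alpha}$. The gap is exactly where you flag it, and the tools you propose do not close it. You suggest absorbing $W-\Theta$ and the torsion by Young's inequality, aided by the Maclaurin bound $\chi_u^m\wedge\omega^{n-m}\ge\delta_0\omega^n$. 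But $W-\Theta=\int_0^1\bigl(\Theta(\chi_{tu})-\Theta(\chi)\bigr)\,dt$ is a sum of terms containing unbounded powers of $\chi_u$; neither that scalar non-degeneracy nor any pointwise Young inequality controls an $(n-1,n-1)$-form built from $\chi_u^{k}$ against the fixed form $\Theta$. Your Maclaurin lower bound, while correct, plays no role in the actual argument.

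The paper supplies two missing ingredients. First, it uses the exponential weight $e^{-pu}$ rather than $(-u)^p$; since $e^{-pu}\bpartial u=-\tfrac{1}{p}\bpartial(e^{-pu})$, a second integration by parts places a clean factor $1/p$ in front of the worst torsion term $\int e^{-pu}\chi_{tu}^{m-1}\wedge\sqrt{-1}\,\p\bpartial\omega^{n-m}$, which is what makes it absorbable for $p$ large (with the polynomial weight the analogous manoeuvre produces $v^{p+1}/(p+1)$, i.e.\ an extra unbounded factor of $v$). Second---and this is the heart of the matter---the paper never compares $W$ to $\Theta$. It uses the concavity of the operator along the segment $t\mapsto\chi_{tu}\in\Gamma^m_\omega$ (the G\aa rding/hyperbolic-polynomial structure of the $S_k$): since $\chi$ satisfies the strict cone condition and $\chi_u$ satisfies the equation, concavity gives a lower bound for $\int_0^1\sqrt{-1}\,\p u\wedge\bpartial u\wedge\Theta(\chi_{tu})\,dt$ of the form
\[
a_1\,\sqrt{-1}\,\p u\wedge\bpartial u\wedge\chi^{m-1}\wedge\omega^{n-m}
\;+\;a_1\int_0^{1/2}\sqrt{-1}\,\p u\wedge\bpartial u\wedge\chi_{tu}^{m-1}\wedge\omega^{n-m}\,dt.
\]
The second piece already carries $\chi_{tu}$ and is precisely what absorbs the Hermitian torsion terms (themselves involving $\chi_{tu}^{m-2}$), via two recursive lemmas (Lemmas~\ref{gcmate-lemma-iteration} and~\ref{gcmate-lemma-1}) that bound quantities like $\int_0^1\!\int_M e^{-pu}\sqrt{-1}\,\p u\wedge\bpartial u\wedge\chi_{tu}^{k-1}\wedge\omega^{n-k}\,dt$ and $\int_0^1\!\int_M e^{-pu}\chi_{tu}^{k}\wedge\omega^{n-k}\,dt$ inductively in $k$. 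None of this structure is visible from ``Young's inequality plus non-degeneracy''.
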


Following the work of Tosatti and Weinkove~\cite{TWv10a, TWv10b}, it suffices to show that there is a constant $C$ such that
\begin{equation}
\label{uniform-key-inequality}
	\int_M |\p e^{- \frac{p}{2} u}|^2 \omega^n \leq C p \int_M e^{- p u} \omega^n
\end{equation}
for $p$ large enough. We refer the readers to \cite{Yau78, TWv10a, TWv10b, TWv13b} for more details.

Applying the technique in \cite{Sun2014e,Sun2014u}, we obtain the following lemma on closed Hermitian manifolds.
\begin{lemma}
\label{uniform-lemma-hermitian}
Under the assumptions of Theorem~\ref{gcmate-theorem-uniform},
inequality~\eqref{uniform-key-inequality} holds true for some uniform constants $C$, $p_0$ such that for all $p \geq p_0$.
\end{lemma}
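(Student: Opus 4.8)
The plan is to establish the integral inequality~\eqref{uniform-key-inequality} by testing the equation against a suitable power of $e^{-u}$ and integrating by parts, following the scheme of Tosatti--Weinkove but adapted to the quotient equation~\eqref{generalized-quotient-equation}. First I would rewrite the equation in a form amenable to integration by parts: since each term $\chi_u^{m-\a}\wedge\omega^{n-m+\a}$ is a closed or almost-closed $(n-1,n-1)$-type expression wedged with $\sqrt{-1}\p\bpartial u$ plus background terms, I would expand $\chi_u^m\wedge\omega^{n-m} - \psi\sum_\a c_\a \chi_u^{m-\a}\wedge\omega^{n-m+\a}$ using $\chi_u = \chi + \sqrt{-1}\p\bpartial u$ and collect the highest-order term in $\sqrt{-1}\p\bpartial u$. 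The key algebraic input is the cone condition~\eqref{uniform-kahler-cone-condition}, which guarantees that the linearization of the left-hand side minus the $\psi\sum c_\a(\cdots)$ part is a strictly positive $(1,1)$ form paired against $\sqrt{-1}\p\bpartial u$; this positivity is what will produce a good sign on the gradient term after integration by parts.

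The main steps, in order, are as follows. Multiply~\eqref{generalized-quotient-equation} (rearranged as an exact-degree $2n$ form identity) by $e^{-pu}$ and integrate over $M$. Then integrate by parts to move a $\bpartial$ off of $u$ and onto $e^{-pu}$, which generates a factor of $-p\, e^{-pu}\,\bpartial u$; pairing this with the remaining $\p u$ and the positive $(1,1)$ form coming from the cone condition yields, after regrouping, a term bounded below by a constant multiple of $p\int_M e^{-pu}|\p u|^2\omega^n$, i.e. essentially $\int_M |\p e^{-\frac{p}{2}u}|^2\omega^n$. The remaining terms split into (i) lower-order "background" terms that carry no derivatives of $u$ at top order, which are bounded by $C\int_M e^{-pu}\omega^n$ once one uses $0 < \epsilon\omega \le \chi \le \epsilon^{-1}\omega$ and the positivity $\chi_u \in \Gamma^m_\omega$; and (ii) torsion terms coming from the non-K\"ahler structure, where $d\omega \neq 0$ forces extra boundary-type contributions. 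The Hermitian torsion terms are handled by the trick from~\cite{Sun2014e, Sun2014u}: one absorbs them using a Cauchy--Schwarz / Young inequality, putting a small fraction $\delta p\int e^{-pu}|\p u|^2$ against the good term (absorbable once $p$ is large) and the rest into $C_\delta\int e^{-pu}\omega^n$. Choosing $\delta$ small and then $p_0$ correspondingly large finishes the estimate.

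The hardest part, I expect, is controlling the torsion contributions uniformly in $p$. On a K\"ahler manifold the identity is clean because one can commute $\p$ and $\bpartial$ with the background forms freely; on a general Hermitian manifold every integration by parts spills out terms involving $\p\omega$, $\bpartial\omega$, $\p\bpartial\omega$ and mixed contractions with $\p u$ and $\bpartial u$. The technical content of Lemma~\ref{uniform-lemma-hermitian} is precisely that all such terms are either (a) at most linear in $\p u$ with bounded coefficients — hence absorbable by Young's inequality with a small multiple of the good quadratic term plus $C\int e^{-pu}\omega^n$ — or (b) of lower order. I would organize this by first writing $\chi_u^{k}\wedge\omega^{n-1-k}\wedge\sqrt{-1}\p\bpartial u$ and using $d$ of it together with Stokes to trade the $\p\bpartial u$ for $du\wedge(\text{derivative of background})$, keeping careful track of which terms retain a factor of $p$. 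A secondary subtlety is that one must verify the coefficient of the good term is genuinely positive, which is where~\eqref{uniform-kahler-cone-condition} enters and cannot be weakened; concretely one checks that the relevant mixed volume $m\chi^{m-1}\wedge\omega^{n-m} - \psi\sum_\a c_\a(m-\a)\chi^{m-\a-1}\wedge\omega^{n-m+\a}$, evaluated against $\sqrt{-1}\p u\wedge\bpartial u$ with $\chi$ replaced by $\chi_u$ along the way, stays bounded below by a positive constant times $|\p u|^2\omega^n$ using the $\Gamma^m_\omega$ positivity and $\chi \le \epsilon^{-1}\omega$. Once these two points are settled, the Moser iteration referenced before Theorem~\ref{gcmate-theorem-uniform} upgrades~\eqref{uniform-key-inequality} to the $L^\infty$ bound.
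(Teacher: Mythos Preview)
Your overall strategy---test against $e^{-pu}$, integrate by parts, use the cone condition to extract a positive gradient term, absorb torsion---matches the paper's. But two of the steps you describe would not go through as written.

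First, the positivity of the ``good'' term. After integration by parts the gradient term does \emph{not} appear with coefficient $m\chi^{m-1}\wedge\omega^{n-m}-\psi\sum_\a c_\a(m-\a)\chi^{m-\a-1}\wedge\omega^{n-m+\a}$; it appears with the interpolated coefficient
\[
m\chi_{tu}^{m-1}\wedge\omega^{n-m}-\psi\sum_{\a=1}^{m-1} c_\a(m-\a)\chi_{tu}^{m-\a-1}\wedge\omega^{n-m+\a},
\]
integrated in $t$ over $[0,1]$ (this comes from writing $\chi_u^k-\chi^k=\int_0^1 k\chi_{tu}^{k-1}\wedge\sqrt{-1}\p\bpartial u\,dt$). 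The cone condition~\eqref{uniform-kahler-cone-condition} is assumed only at $\chi$, not at $\chi_{tu}$ for $t\in(0,1]$, and ``$\Gamma^m_\omega$ positivity together with $\chi\le\epsilon^{-1}\omega$'' does not give a uniform lower bound along the path. The paper's proof resolves this by invoking concavity of the relevant hyperbolic polynomials to extract from the $t$-integral a definite multiple of $p\int_M e^{-pu}\sqrt{-1}\p u\wedge\bpartial u\wedge\chi^{m-1}\wedge\omega^{n-m}$ plus a companion integral over $t\in[0,\tfrac12]$; see~\eqref{uniform-lemma-hermitian-integral-greater-positive-term}. Your proposal does not supply this step.

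Second, and more seriously, the terms you label ``lower-order background terms'' are \emph{not} bounded by $C\int_M e^{-pu}\omega^n$. After integration by parts and after applying Cauchy--Schwarz to the torsion pieces, one is left with quantities such as
\[
\frac{1}{p}\int_0^1\!\!\int_M e^{-pu}\,\chi_{tu}^{m-2}\wedge\omega^{n-m+2}\,dt
\quad\text{and}\quad
\frac{1}{p}\int_0^1\!\!\int_M e^{-pu}\,\chi_{tu}^{m-1}\wedge\sqrt{-1}\p\bpartial\omega^{n-m}\,dt,
\]
which contain positive powers of $\chi_{tu}$. Since $\chi_u$ has no a~priori upper bound at this stage, these are not controlled by $\int_M e^{-pu}\omega^n$. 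The paper closes this gap with Lemma~\ref{gcmate-lemma-iteration} and Lemma~\ref{gcmate-lemma-1} (both imported from~\cite{Sun2014u}): the first bounds $\int_0^1\int e^{-pu}\chi_{tu}^{k}\wedge\omega^{n-k}\,dt$ recursively from below by lower-degree analogues, and the second bounds the $\p\bpartial\omega^{n-m}$ term by the gradient integral itself plus such lower-degree terms. These iteration lemmas---not a single Young inequality---are the technical heart of the Hermitian case and are what make the argument close for $p$ large.
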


We recall the following two lemmas proven in \cite{Sun2014u}. 
\begin{lemma}
\label{gcmate-lemma-iteration}
Suppose that there are constants $\Lambda \geq \lambda > 0$ such that $\chi - \lambda \omega, \Lambda \omega - \chi \in \Gamma^m_\omega$. For $2 \leq k \leq m$, we have
\begin{equation}
\begin{aligned}
	&  \int^1_0 \left(\int_M e^{- p u} \sqrt{- 1} \p u \wedge \bpartial u \wedge \chi^{k - 1}_{t u} \wedge \omega^{n - k} \right) dt \\
	&\qquad\geq \frac{(k - 1)\lambda}{k} \int^1_0 \left(\int_M e^{- p u} \sqrt{- 1} \p u \wedge \bpartial u \wedge \chi^{k - 2}_{t u} \wedge \omega^{n - k + 1} \right) dt . 
\end{aligned}
\end{equation}
and
\begin{equation}
	 \int^1_0 \left(\int_M e^{- p u} \chi^k_{t u} \wedge \omega^{n - k}\right) dt \geq \frac{k\lambda}{k + 1} \int^1_0 \left(\int_M e^{- p u} \chi^{k - 1}_{t u} \wedge \omega^{n - k + 1}\right) dt  .
\end{equation}
\end{lemma}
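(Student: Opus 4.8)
The statement is about integrals over $M$ of quantities involving $e^{-pu}$, $\sqrt{-1}\,\p u \wedge \bpartial u$, and wedge powers of $\chi_{tu} = \chi + t\sqrt{-1}\,\p\bpartial u$. The two displayed inequalities have the same flavor: comparing a configuration with $k$ copies of $\chi_{tu}$ (and $n-k$ of $\omega$) to one with $k-1$ copies of $\chi_{tu}$ (and $n-k+1$ of $\omega$). So I'll describe the strategy for both at once.

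The core algebraic input is the cone hypothesis $\chi - \lambda\omega \in \Gamma^m_\omega$ together with a Gårding/Newton–Maclaurin–type inequality for mixed wedge products of $m$-positive forms. Since $\chi_{tu}$ lies in $\Gamma^m_\omega$ for $t \in [0,1]$ (it is a convex combination of $\chi$ and $\chi_u$, both $m$-positive, and $\Gamma^m_\omega$ is convex), and $\chi_{tu} - t\lambda\omega = (1-t)(\chi - \lambda\omega) + t(\chi_u - \lambda\omega)$—hmm, this needs $\chi_u \geq \lambda\omega$ too, so more carefully one uses $\chi_{tu} \geq t\lambda\omega$ from $\chi \geq \lambda\omega$ alone after discarding the $t\,\sqrt{-1}\p\bpartial u$ part via positivity in $\Gamma^m_\omega$. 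The first step, then, is to establish the pointwise inequality: for any $\beta \in \Gamma^m_\omega$ with $\beta \geq \lambda\omega$ and any nonnegative $(1,1)$-form $\gamma$ (here $\gamma = \sqrt{-1}\,\p u \wedge \bpartial u \geq 0$),
$$
\gamma \wedge \beta^{k-1} \wedge \omega^{n-k} \;\geq\; \frac{(k-1)\lambda}{k}\,\gamma \wedge \beta^{k-2}\wedge\omega^{n-k+1},
\qquad
\beta^k \wedge \omega^{n-k} \;\geq\; \frac{k\lambda}{k+1}\,\beta^{k-1}\wedge\omega^{n-k+1}.
$$
These are pointwise statements about mixed discriminants of the eigenvalues of $\beta$ relative to $\omega$; diagonalizing $\omega$ and $\beta$ simultaneously at a point, $\beta^{k}\wedge\omega^{n-k}$ is (up to a combinatorial constant) $S_k(\mu)\,\omega^n$ with $\mu \in \Gamma_k$ and $\mu_i \geq \lambda$, and the ratio $S_k/S_{k-1} \geq \frac{k}{n-k+1}\cdot(\text{something})$—one uses the elementary bound $S_k(\mu) \geq \frac{?}{?}\,\lambda\,S_{k-1}(\mu)$ valid on the positive cone when all $\mu_i \geq \lambda$, which follows from $S_k = \frac{1}{k}\sum_i \mu_i S_{k-1;i}$ and $\sum_i S_{k-1;i} \geq (n-k+1)S_{k-1}$; the constants work out to the claimed $\frac{k\lambda}{k+1}$ after accounting for the $\binom{n}{k}$ normalizations in $\beta^k\wedge\omega^{n-k} = \frac{k!(n-k)!}{n!}\,n!\cdots$. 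The $\gamma$-weighted version is the same computation with $\mu$ restricted to the complementary hyperplane to the direction of $\gamma$.

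Once the pointwise inequalities are in hand, the second step is purely formal: multiply through by $e^{-pu} \geq 0$ (which does not disturb the inequality since everything is a nonnegative multiple of $\omega^n$), integrate over $M$, then integrate in $t$ over $[0,1]$. Since the pointwise inequality holds for every fixed $t$ with the same constant, it survives both integrations. The convexity remark ensuring $\chi_{tu} \in \Gamma^m_\omega$ for all $t \in [0,1]$ is needed to know the eigenvalue vector $\mu$ lies in $\Gamma_k \supseteq \Gamma_m$ throughout; and the lower bound $\chi_{tu} \geq t\lambda\omega$ follows because subtracting $t\lambda\omega$ from $\chi_{tu}$ leaves $t(\chi - \lambda\omega) + t\sqrt{-1}\p\bpartial u$—wait, that is not obviously $m$-positive. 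The correct route is: $\chi_{tu} - t\lambda\omega = (1-t)\chi + t\chi_u - t\lambda\omega$; since $\chi \geq \lambda\omega \geq t\lambda\omega$ in the ordinary sense and $\chi_u \in \Gamma^m_\omega$, one gets $\chi_{tu} - t\lambda\omega = (1-t)(\chi - t\lambda\omega) + t\chi_u \in \Gamma^m_\omega$ as a sum of an $m$-positive form and a positive form. Hence each eigenvalue of $\chi_{tu}$ relative to $\omega$ exceeds $t\lambda$, which after pulling $t$ through the $k$-fold product yields the $t$-independent constants $\frac{(k-1)\lambda}{k}$ and $\frac{k\lambda}{k+1}$ exactly (the powers of $t$ cancel against $t^{k}$ vs $t^{k-1}$ appropriately—one should track this carefully). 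Actually re-examining: the factor $t\lambda$ appears to the first power only in one eigenvalue slot, so the $t$-powers on both sides of the pointwise bound match and no spurious $t$ remains.

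The \textbf{main obstacle} I anticipate is getting the combinatorial constants exactly right: translating between $\beta^k\wedge\omega^{n-k}$ and $S_k$ of the relative eigenvalues involves binomial normalizations, and the inequality $S_k(\mu) \geq c_k\lambda S_{k-1}(\mu)$ must be proven with the sharp constant $c_k$ so that, after renormalization, one lands precisely on $\frac{k\lambda}{k+1}$ rather than something weaker. The clean way is to reduce to the one-variable identity: if all $\mu_i \geq \lambda$, write $\mu_i = \lambda + \nu_i$ with $\nu_i \geq 0$ and expand $S_k(\lambda + \nu)$; the coefficient bookkeeping $S_k(\mu) = \sum_{j} \lambda^{k-j}\binom{n-j}{k-j}S_j(\nu)$ combined with $S_k$ vs. $S_{k-1}$ comparison term-by-term gives the result. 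This is routine once set up but is the only place where an error would propagate, so it deserves care; I would verify it in dimension-independent form and then specialize. Everything else—the $\gamma$-weighted variant, multiplying by $e^{-pu}$, the double integration—is mechanical.
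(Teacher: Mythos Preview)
The paper does not prove this lemma; it merely cites \cite{Sun2014u}. So let me evaluate your plan on its own merits and against what the argument in that reference must look like.

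Your plan has a genuine gap: the pointwise-in-$t$ inequality you aim for simply does not hold. Your intended route is to show that each eigenvalue of $\chi_{tu}$ (relative to $\omega$) is bounded below by a positive constant, then feed that into a Newton--Maclaurin bound for $S_k/S_{k-1}$. But membership in $\Gamma^m_\omega$ for $m<n$ permits some eigenvalues to be negative, so no such lower bound exists. Your attempted decomposition $\chi_{tu}-t\lambda\omega=(1-t)(\chi-t\lambda\omega)+t\chi_u$ is algebraically wrong (the right side equals $\chi_{tu}-(1-t)t\lambda\omega$); the correct identity is $\chi_{tu}-(1-t)\lambda\omega=(1-t)(\chi-\lambda\omega)+t\chi_u\in\Gamma^m_\omega$, which again only gives $\Gamma^m$-membership, not an eigenvalue floor. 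So the ``write $\mu_i=\lambda+\nu_i$ with $\nu_i\ge 0$'' step is unavailable, and the combinatorial bookkeeping you flag as the main obstacle is not the real issue.

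The actual mechanism uses the $t$-integral in an essential (not mechanical) way. The pointwise input is G\aa rding's inequality: since $\chi_{tu}\in\Gamma^m_\omega$ and $\chi-\lambda\omega\in\Gamma^m_\omega$, one has $\chi_{tu}^{k-1}\wedge(\chi-\lambda\omega)\wedge\omega^{n-k}\ge 0$ for $k\le m$. Writing $\chi=\chi_{tu}-t\sqrt{-1}\,\partial\bar\partial u$ turns this into
\[
\chi_{tu}^{k}\wedge\omega^{n-k}\;\ge\;\lambda\,\chi_{tu}^{k-1}\wedge\omega^{n-k+1}\;+\;t\,\chi_{tu}^{k-1}\wedge\sqrt{-1}\,\partial\bar\partial u\wedge\omega^{n-k}.
\]
The last term has no sign pointwise. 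Multiply by $e^{-pu}$, integrate over $M$, then integrate in $t$; using $\chi_{tu}^{k-1}\wedge\sqrt{-1}\,\partial\bar\partial u=\tfrac{1}{k}\tfrac{d}{dt}\chi_{tu}^{k}$ and integrating by parts in $t$ gives
\[
\frac{k+1}{k}\int_0^1\!\!\int_M e^{-pu}\chi_{tu}^{k}\wedge\omega^{n-k}\,dt\;\ge\;\lambda\int_0^1\!\!\int_M e^{-pu}\chi_{tu}^{k-1}\wedge\omega^{n-k+1}\,dt\;+\;\frac{1}{k}\int_M e^{-pu}\chi_{u}^{k}\wedge\omega^{n-k},
\]
and the boundary term is $\ge 0$ because $\chi_u\in\Gamma^m_\omega$. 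This is precisely where the factor $\tfrac{k\lambda}{k+1}$ comes from---it is the integration by parts in $t$, not any Newton--Maclaurin constant. The first inequality of the lemma is proved the same way, with an extra factor $\sqrt{-1}\,\partial u\wedge\bar\partial u$ (a nonnegative $(1,1)$-form, so G\aa rding still applies) and $k$ replaced by $k-1$ in the derivative identity, yielding $\tfrac{(k-1)\lambda}{k}$.

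In short: the inequality is \emph{not} pointwise in $t$; the $t$-integration by parts is the heart of the argument, and your plan misses it.
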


The second lemma follows from a lemma of Zhang~\cite{Zhang2015} and Lemma~\ref{gcmate-lemma-iteration}.
\begin{lemma}
\label{gcmate-lemma-1}
There is a uniform constant $C > 0$ such that
\begin{equation}
\label{gcmate-lemma-1-inequality}
\begin{aligned}
	&\, \left|\int^1_0 \left(\int_M e^{- p u} \chi^{m - 1}_{t u} \wedge \sqrt{- 1} \p \bpartial \omega^{n - m} \right) dt \right| \\
	\leq&\, C p \int^1_0 \left(\int_M e^{- p u} \sqrt{- 1} \p u \wedge \bpartial u \wedge \chi^{m - 2}_{t u} \wedge \omega^{n - m + 1}\right) dt \\
	&\,  + C \int^1_0 \left(\int_M e^{- p u} \chi^{m - 2}_{t u} \wedge \omega^{n - m + 2}\right) dt  .
\end{aligned}
\end{equation}
\end{lemma}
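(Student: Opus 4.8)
Here is how I would proceed. If $m=n$ there is nothing to prove: $\omega^{n-m}=\omega^{0}=1$, so $\sqrt{-1}\partial\bar\partial\omega^{n-m}=0$ and both sides of the asserted inequality vanish. Assume from now on that $1\le m<n$, and hence $n-m\ge 1$.

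The first step is to integrate by parts twice, so as to move $\sqrt{-1}\partial\bar\partial$ off $\omega^{n-m}$. Since $e^{-pu}\chi^{m-1}_{tu}$ has bidegree $(m-1,m-1)$ and $\omega^{n-m}$ has bidegree $(n-m,n-m)$, the intermediate forms arising are of bidegree $(n-1,n)$ or $(n,n-1)$; on these $d$ agrees with $\partial$, respectively $\bar\partial$, so the exact pieces integrate to zero on the closed manifold $M$, and one gets
\[
	\int_M e^{-pu}\chi^{m-1}_{tu}\wedge\sqrt{-1}\partial\bar\partial\omega^{n-m}
	=\int_M\sqrt{-1}\partial\bar\partial\bigl(e^{-pu}\chi^{m-1}_{tu}\bigr)\wedge\omega^{n-m}.
\]
Expanding $\sqrt{-1}\partial\bar\partial(e^{-pu}\chi^{m-1}_{tu})$ by the Leibniz rule, and using $\partial\chi_{tu}=\partial\chi$, $\bar\partial\chi_{tu}=\bar\partial\chi$ (because $\partial\sqrt{-1}\partial\bar\partial u=\bar\partial\sqrt{-1}\partial\bar\partial u=0$), so that each derivative falling on $\chi^{m-1}_{tu}$ lowers the $\chi_{tu}$-power while introducing only a fixed smooth torsion form of $\chi$, the right-hand side splits into integrals of the following types: (a) $p^{2}e^{-pu}\sqrt{-1}\partial u\wedge\bar\partial u\wedge\chi^{m-1}_{tu}\wedge\omega^{n-m}$; (b) $pe^{-pu}\sqrt{-1}\partial\bar\partial u\wedge\chi^{m-1}_{tu}\wedge\omega^{n-m}$; (c) $pe^{-pu}\sqrt{-1}\partial u\wedge\chi^{m-2}_{tu}\wedge\bar\partial\chi\wedge\omega^{n-m}$ and its conjugate-type companion with $\bar\partial u$, $\partial\chi$; (d) $e^{-pu}\chi^{m-3}_{tu}\wedge\partial\chi\wedge\bar\partial\chi\wedge\omega^{n-m}$ and $e^{-pu}\chi^{m-2}_{tu}\wedge\partial\bar\partial\chi\wedge\omega^{n-m}$.

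Now I would integrate over $t\in[0,1]$ and dispose of these one at a time. For (b) one uses the identity $\int_{0}^{1}\chi^{m-1}_{tu}\wedge\sqrt{-1}\partial\bar\partial u\,dt=\tfrac1m(\chi^{m}_{u}-\chi^{m})$ and then the equation~\eqref{generalized-quotient-equation} to rewrite it in terms of $\int_M e^{-pu}\psi\sum_{\alpha=1}^{m}c_{\alpha}\chi^{m-\alpha}_{u}\wedge\omega^{n-m+\alpha}$ and $\int_M e^{-pu}\chi^{m}\wedge\omega^{n-m}$, i.e.\ mass terms in powers $\le m-1$ of $\chi_u$ together with a term dominated by $\int_M e^{-pu}\omega^{n}$. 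The terms in (d) already have $\chi_{tu}$-power $\le m-2$ and only carry fixed smooth torsion of $\chi$. The terms in (c) are integrated by parts once more, which splits off a genuine $\sqrt{-1}\partial u\wedge\bar\partial u$-piece, a further $\sqrt{-1}\partial\bar\partial u$-piece (again absorbed through~\eqref{generalized-quotient-equation}), and torsion of still lower order. At this stage the lemma of Zhang~\cite{Zhang2015} is invoked to bound every remaining indefinite torsion contribution by $\chi^{m-2}_{tu}\wedge\omega^{n-m+2}$, and the two parts of Lemma~\ref{gcmate-lemma-iteration} are applied repeatedly, inside the $e^{-pu}$-weighted, $t$-integrated integrals, to reduce all occurring powers of $\chi_{tu}$ to $\chi^{m-2}_{tu}$; collecting the estimates yields the inequality of the lemma.

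The delicate point is exactly this reconciliation. Integration by parts unavoidably produces the top power $\chi^{m-1}_{tu}$ and surplus factors of $p$, whereas Lemma~\ref{gcmate-lemma-iteration} only lets one pass from higher $\chi_{tu}$-powers to lower ones; it is the equation~\eqref{generalized-quotient-equation} together with Zhang's lemma that forces the $\chi^{m-1}_{tu}$-terms and the extra powers of $p$ to collapse to the asserted form. A crude Cauchy–Schwarz is not enough here, since $\chi_{tu}$ is only assumed $m$-positive, so that $\chi^{m-1}_{tu}$ and $\chi^{m-2}_{tu}$ are not positive forms and the pointwise ratio of the corresponding Maclaurin expressions is unbounded. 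Everything else is bookkeeping with the torsion of $\chi$ and $\omega$.
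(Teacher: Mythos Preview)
Your approach has a genuine gap. After integrating by parts twice you obtain, among the pieces you label (a)--(d), the term
\[
p^{2}\int_{0}^{1}\!\!\int_{M} e^{-pu}\,\sqrt{-1}\,\partial u\wedge\bar\partial u\wedge\chi^{m-1}_{tu}\wedge\omega^{n-m}\,dt .
\]
This is a nonnegative quantity (since $\chi_{tu}\in\Gamma^{m}_{\omega}$ makes $\chi^{m-1}_{tu}\wedge\omega^{n-m}$ a positive $(n-1,n-1)$--form), and it carries the factor $p^{2}$ together with the top power $\chi^{m-1}_{tu}$. The right--hand side of the lemma contains only a single factor of $p$ and only the lower power $\chi^{m-2}_{tu}$. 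Nothing in your toolkit removes that extra power of $p$: Lemma~\ref{gcmate-lemma-iteration} goes the wrong way (it bounds lower $\chi_{tu}$--powers from below by higher ones, not higher by lower), the equation~\eqref{generalized-quotient-equation} relates $\chi^{m}_{u}$ to lower powers of $\chi_{u}$ rather than $\chi^{m-1}_{tu}$ to lower powers, and Zhang's lemma is a pointwise inequality that cannot change the $p$--scaling of an integral. Your term (b) does not cancel (a): after your rewriting of (b) via the equation you still have $p\int_{M}e^{-pu}\psi c_{1}\chi^{m-1}_{u}\wedge\omega^{n-m+1}$ and $p\int_{M}e^{-pu}\chi^{m}\wedge\omega^{n-m}$, neither of which offsets a $p^{2}$ term. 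So the scheme of two integrations by parts cannot close.

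The paper's route avoids this by never producing a $p^{2}$. It rests on Zhang's pointwise inequality (from \cite{Zhang2015}) for forms in $\Gamma^{m}_{\omega}$, which lets one compare $\chi^{m-1}_{tu}$ wedged against the bounded torsion form $\sqrt{-1}\partial\bar\partial\omega^{n-m}$ with $\chi^{m-1}_{tu}\wedge\omega^{n-m+1}$; from there a single integration by parts (writing $\chi^{m-1}_{tu}=\chi^{m-2}_{tu}\wedge(\chi+t\sqrt{-1}\partial\bar\partial u)$ and moving one derivative off $u$) produces exactly one factor of $p$ on a $\chi^{m-2}_{tu}$--gradient term, plus torsion remainders that Lemma~\ref{gcmate-lemma-iteration} absorbs into the mass term. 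In short: apply Zhang's lemma first, then integrate by parts once --- not twice.
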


\begin{proof}[Proof of Lemma~\ref{uniform-lemma-hermitian}]

Consider the integral
\begin{equation}
\label{uniform-lemma-kahler-integral}
\begin{aligned}
	I &= \int_M e^{- p u} \Big[(\chi^m_u  - \chi^m ) \wedge \omega^{n - m} - \psi \sum^{m - 1}_{\a = 1}  c_\a(\chi^{m - \a}_u - \chi^{m - \a} )\wedge \omega^{n - m + \a}\Big] \\
	&= \int_M \int^1_0 e^{- p u} \sqrt{- 1} \p\bpartial u \wedge \Big[ m \chi^{m - 1}_{t u} \wedge \omega^{n - m}  \\
	&\hspace{12em} - \psi \sum^{m - 1}_{\a = 1} c_\a (m - \a) \chi^{m - \a - 1}_{t u} \wedge \omega^{n - m + \a} \Big] dt.
\end{aligned}
\end{equation}
It is easy to see that for some constant $C$,
\begin{equation}
\label{uniform-lemma-hermitian-integral-less}
	I \leq C \int_M e^{- p u} \chi^n .
\end{equation}

On the other hand, using integration by parts,
\begin{equation}
\label{uniform-lemma-hermitian-integral-greater}
\begin{aligned}
	I =&\, p \int^1_0 \int_M e^{- p u} \sqrt{- 1} \p u \wedge \bpartial u \wedge \Big[ m \chi^{m - 1}_{t u} \wedge \omega^{n - m} \\
	&\,\,\,\hspace{12em}- \psi \sum^{m - 1}_{\a = 1} c_\a (m - \a) \chi^{m - \a - 1}_{t u} \wedge \omega^{n - m + \a} \Big] dt \\
	&\, - \frac{m}{p} \int^1_0 \int_M e^{- p u} \chi^{m - 1}_{t u} \wedge \sqrt{- 1} \p\bpartial \omega^{n - m} dt \\
	&\, + \frac{m}{p} \int^1_0 \int_M e^{- p u} \sqrt{ - 1} \p\bpartial \chi^{m - 1}_{t u} \wedge \omega^{n - m} dt \\
	&\, + m (m - 1) \int^1_0 \int_M e^{- p u} \sqrt{ - 1} \bpartial u \wedge \p\chi \wedge \chi^{m - 2} \wedge \omega^{n - m} dt \\
	&\, - m (m - 1) \int^1_0 \int_M e^{- p u} \sqrt{ - 1} \p u \wedge \bpartial \chi \wedge \chi^{m - 2} \wedge \omega^{n - m} dt \\
	&\, - \sum^{m - 1}_{\a = 1} \int^1_0 \int_M e^{- p u} \psi \sqrt{- 1} \bpartial u \wedge \p \eta_\a\wedge \chi^{m - \a - 1}_{t u} \wedge \omega^{n - m + \a - 1} dt \\
	&\, - \sum^{m - 1}_{\a = 1} c_\a (m - \a) \int^1_0 \int_M e^{- p u} \sqrt{- 1} \bpartial u \wedge \p \psi \wedge \chi^{m - \a - 1}_{t u} \wedge \omega^{n - m + \a} dt ,
\end{aligned}
\end{equation}
where $c_0 = - 1 $ and 
\begin{equation}
	\eta_\a = c_{\a - 1} (m - \a + 1) (m - \a) \chi + c_\a (m - \a) (n - m + \a) \omega .
\end{equation}
Because of the concavity of hyperbolic functions, the first term in \eqref{uniform-lemma-hermitian-integral-greater} is greater than
\begin{equation}
\label{uniform-lemma-hermitian-integral-greater-positive-term}	
\begin{aligned}
	&\, 2 a_1 p  \int_M e^{- p u} \sqrt{- 1} \p u \wedge \bpartial u \wedge \chi^{m - 1} \wedge \omega^{n- m} \\
	+ &\, 2 a_1 p  \int^{\frac{1}{2}}_0\int_M e^{- p u} \sqrt{- 1} \p u \wedge \bpartial u \wedge \chi^{m - 1}_{t u} \wedge \omega^{n - m} dt ,
\end{aligned}
\end{equation}
for some small $a_1 > 0$. Applying Schwarz's inequality pointwise and Lemma~\ref{gcmate-lemma-iteration}, we can find a lower bound for the last five terms in \eqref{uniform-lemma-hermitian-integral-greater}, for any $\epsilon_1 > 0$,
\begin{equation}
\label{uniform-lemma-hermitian-integral-greater-low-bound}
\begin{aligned}
	& - \epsilon_1 p \int^1_0 \int_M e^{ - p u} \sqrt{- 1} \p u \wedge \bpartial u \wedge \chi^{m - 2}_{t u} \wedge \omega^{n - m + 1} dt \\
	& - \frac{C_1}{\epsilon_1 p} \int^1_0 \int_M e^{- p u} \chi^{m - 2}_{t u} \wedge \omega^{n - m + 2} dt.
\end{aligned}
\end{equation}
Then  we have
\begin{equation}
\label{uniform-lemma-hermitian-integral-greater-0-0}
\begin{aligned}
	I \geq&\, 2 a_1 p  \int_M e^{- p u} \sqrt{- 1} \p u \wedge \bpartial u \wedge \chi^{m - 1} \wedge \omega^{n- m} \\
	&\, + 2 a_1 p  \int^{\frac{1}{2}}_0\int_M e^{- p u} \sqrt{- 1} \p u \wedge \bpartial u \wedge \chi^{m - 1}_{t u} \wedge \omega^{n - m} dt \\
	&\, - \frac{m}{p} \int^1_0 \int_M e^{- p u} \chi^{m - 1}_{t u} \wedge \sqrt{- 1} \p\bpartial \omega^{n - m} dt \\
	&\,- \epsilon_1 p \int^1_0 \int_M e^{ - p u} \sqrt{- 1} \p u \wedge \bpartial u \wedge \chi^{m - 2}_{t u} \wedge \omega^{n - m + 1} dt \\
	&\, - \frac{C_1}{\epsilon_1 p} \int^1_0 \int_M e^{- p u} \chi^{m - 2}_{t u} \wedge \omega^{n - m + 2} dt .
\end{aligned}
\end{equation}

If $m = 2$, then when $\epsilon_1$ is small enough
\begin{equation}
\label{uniform-lemma-hermitian-integral-greater-0}
\begin{aligned}
	I \geq&\,  a_1 p  \int_M e^{- p u} \sqrt{- 1} \p u \wedge \bpartial u \wedge \chi \wedge \omega^{n - 2} \\
	&\, - \frac{2}{p} \int^1_0 \int_M e^{- p u} \chi_{t u} \wedge \sqrt{- 1} \p\bpartial \omega^{n - 2} dt - \frac{C_1}{\epsilon_1 p} \int_M e^{- p u} \omega^{n} .
\end{aligned}
\end{equation}
We only need to control the second term,
\begin{equation}
\begin{aligned}
	&\, - \frac{2}{p} \int^1_0 \int_M e^{- p u} \chi_{t u} \wedge \sqrt{- 1} \p\bpartial \omega^{n - 2} dt \\
	=&\, - \frac{2}{p} \int_M e^{- p u} \chi \wedge \sqrt{- 1} \p\bpartial \omega^{n - 2}  + \int_M e^{- p u} \p u \wedge \bpartial u \wedge \p\bpartial \omega^{n - 2} \\
	\geq&\, - \frac{C_2}{p} \int_M e^{- p u} \omega^n - C_3 \int_M e^{- p u} \sqrt{- 1} \p u \wedge \bpartial u \wedge \omega^{n - 1}.
\end{aligned}
\end{equation}
Choosing $p$ large enough, we obtain
\begin{equation}
	I \geq \frac{a_1 p}{2} \int_M e^{- p u} \sqrt{- 1} \p u \wedge \bpartial u \wedge \chi \wedge \omega^{n - 2} - \frac{C_4}{p} \int_M e^{- p u} \omega^{n} .
\end{equation}

If $m \geq 3$, we use Lemma~\ref{gcmate-lemma-1} to control the most troublesome term,
\begin{equation}
\label{uniform-lemma-hermitian-integral-greater-0-1}
\begin{aligned}
	I \geq&\, 2 a_1 p  \int_M e^{- p u} \sqrt{- 1} \p u \wedge \bpartial u \wedge \chi^{m - 1} \wedge \omega^{n- m} \\
	&\, + 2 a_1 p  \int^{\frac{1}{2}}_0\int_M e^{- p u} \sqrt{- 1} \p u \wedge \bpartial u \wedge \chi^{m - 1}_{t u} \wedge \omega^{n - m} dt \\
	&\,- C_5 \int^1_0 \int_M e^{- p u} \sqrt{- 1} \p u \wedge \bpartial u \wedge \chi^{m - 2}_{t u} \wedge \omega^{n - m + 1} dt \\
	&\,- \epsilon_1 p \int^1_0 \int_M e^{ - p u} \sqrt{- 1} \p u \wedge \bpartial u \wedge \chi^{m - 2}_{t u} \wedge \omega^{n - m + 1} dt \\
	&\, 
	- \frac{ C_6}{p} \int^1_0 \int_M e^{- p u} \chi^{m - 2}_{t u} \wedge \omega^{n - m + 2} dt  .
\end{aligned}
\end{equation}
Note that $C_6$ depends on $\epsilon_1$. As shown in \cite{Sun2014u}, if $p$ is sufficiently large,
\begin{equation}
\begin{aligned}
	&\, \frac{1}{p} \int^1_0 \int_M e^{- p u} \chi^{m - 2}_{t u} \wedge \omega^{n - m + 2} dt \\
	\leq&\, C_7 \int^1_0 \int_M e^{- p u} \sqrt{- 1} \p u \wedge \bpartial u \wedge \chi^{m - 3}_{t u} \wedge \omega^{n - m + 2} dt \\
	&\,\qquad + \frac{2}{p} \int_M e^{- p u} \chi^{m - 2} \wedge \omega^{n - m + 2} .
\end{aligned}
\end{equation}
And thus
\begin{equation}
\label{uniform-lemma-hermitian-integral-greater-0-2}
\begin{aligned}
	I \geq&\, 2 a_1 p  \int_M e^{- p u} \sqrt{- 1} \p u \wedge \bpartial u \wedge \chi^{m - 1} \wedge \omega^{n- m} \\
	&\, + 2 a_1 p  \int^{\frac{1}{2}}_0\int_M e^{- p u} \sqrt{- 1} \p u \wedge \bpartial u \wedge \chi^{m - 1}_{t u} \wedge \omega^{n - m} dt \\
	&\,- C_5 \int^1_0 \int_M e^{- p u} \sqrt{- 1} \p u \wedge \bpartial u \wedge \chi^{m - 2}_{t u} \wedge \omega^{n - m + 1} dt \\
	&\,- \epsilon_1 p \int^1_0 \int_M e^{ - p u} \sqrt{- 1} \p u \wedge \bpartial u \wedge \chi^{m - 2}_{t u} \wedge \omega^{n - m + 1} dt \\
	&\, 
	-  C_8 \int^1_0 \int_M e^{- p u} \sqrt{- 1} \p u \wedge \bpartial u \wedge \chi^{m - 3}_{t u} \wedge \omega^{n - m + 2} dt\qquad \\
	&\, - \frac{2 C_6}{p} \int_M e^{- p u} \chi^{m - 2} \wedge \omega^{n - m + 2} .
\end{aligned}
\end{equation}
Using Lemma~\ref{gcmate-lemma-iteration} and the fact
\begin{equation}
\begin{aligned}
	&\,\int^1_0 \int_M e^{- p u} \sqrt{- 1} \p u \wedge \bpartial u \wedge \chi^{m - 2}_{t u} \wedge \omega^{n - m + 1} dt \\
	\leq&\, 2^{m - 1} \int^{\frac{1}{2}}_0 \int_M e^{- p u} \sqrt{- 1} \p u \wedge \bpartial u \wedge \chi^{m - 2}_{t u} \wedge \omega^{n - m + 1} dt,
\end{aligned}
\end{equation}
we can choose $\epsilon_1$ small enough and then $p$ large enough such that
\begin{equation}
\label{uniform-lemma-hermitian-integral-greater-0-3}
\begin{aligned}
	I \geq&\, 2 a_1 p  \int_M e^{- p u} \sqrt{- 1} \p u \wedge \bpartial u \wedge \chi^{m - 1} \wedge \omega^{n- m} \\
	&\, 
	- \frac{2 C_6}{p} \int_M e^{- p u} \chi^{m - 2} \wedge \omega^{n - m + 2}  .
\end{aligned}
\end{equation}

\end{proof}

\bigskip

\section{The second order estimate}
\label{C2}
\setcounter{equation}{0}
\medskip
In this section, we prove the partial second order estimates. Note that the  sharp form of estimates also implies the uniform estimate as shown in \cite{TWv10a}.

In order to obtain the second order estimate, we need the following lemma. There are more general statements by Guan~\cite{Guan2014a},  Collins and   Sz\'ekelyhidi~\cite{CollinsSzekelyhidi2014a},  Sz\'ekelyhidi~\cite{Szekelyhidi2014b} respectively. However, for completeness we include a proof following \cite{FLM11,CollinsSzekelyhidi2014a}.
\begin{lemma}
\label{lemma-alternative}
There are constants $N$, $\theta > 0$ such that when $w > N$ at a point $p$,
\begin{equation}
\label{lemma-alternative-inequality}
	\sum_i F^{i\bar i} u_{i\bar i} \leq - \theta \Bigg(\sum_i  F^{i\bar i}  + 1\Bigg) ,
\end{equation}
under coordinates around $p$ such that $g_{i\bar j} = \delta_{ij}$ and $X_{i\bar j}$ is diagonal at the point $p$.
\end{lemma}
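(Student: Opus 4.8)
The plan is to prove \eqref{lemma-alternative-inequality} by separating two regimes according to the size of the smallest eigenvalue of $X$. Denote by $\lambda_1 = X_{1\bar 1} \geq \cdots \geq \lambda_n = X_{n\bar n} > 0$ the eigenvalues of $X$ with respect to $g$ at $p$ and put $\mu_i = 1/\lambda_i = X^{i\bar i}$; thus $w > N$ means $\lambda_1 > N$, with $N$ to be fixed large. Two facts from Section~\ref{preliminary} are the starting point. Since $\sum_i F^{i\bar i} X_{i\bar i} = \sum_\a \frac{\a c_\a}{C^\a_n} S_\a \leq n/\psi$ and $\chi_{i\bar i} \geq \epsilon$, writing $u_{i\bar i} = X_{i\bar i} - \chi_{i\bar i}$,
\begin{equation*}
	\sum_i F^{i\bar i} u_{i\bar i} = \sum_i F^{i\bar i} X_{i\bar i} - \sum_i F^{i\bar i}\chi_{i\bar i} \leq \frac{n}{\psi} - \epsilon \sum_i F^{i\bar i},
\end{equation*}
so \eqref{lemma-alternative-inequality} holds with $\theta = \epsilon/2$ whenever $\sum_i F^{i\bar i}$ exceeds an explicit constant $L_\ast$ depending only on $n$, $\epsilon$ and the bounds on $\psi$. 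On the other hand $\sum_i F^{i\bar i} \geq \frac{S_1}{n\psi} \geq \frac{1}{n\psi\lambda_n}$, so $\sum_i F^{i\bar i} > L_\ast$ automatically once $\lambda_n < c_0$, where $c_0 > 0$ depends only on $n$, $\epsilon$ and $\psi$. This settles the regime $\lambda_n < c_0$; from now on assume $\lambda_n \geq c_0$ and $\lambda_1 > N$.

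Now every $\mu_i \leq c_0^{-1}$, so $F^{i\bar i} = \sum_\a \frac{c_\a}{C^\a_n} S_{\a - 1;i}\,\mu_i^2 \leq C_1$ for a constant $C_1 = C_1(c_0)$, whence $\sum_i F^{i\bar i} \leq nC_1$; moreover $F^{1\bar 1} \leq C_1\mu_1^2$ gives $F^{1\bar 1}\lambda_1 \leq C_1/\lambda_1 \leq C_1/N$. The heart of the matter is to find a fixed positive lower bound for $\sum_{i\geq 2} F^{i\bar i}(\chi_{i\bar i} - \lambda_i)$, which I would extract from the cone condition~\eqref{int-gcmate-cone-condition-1}. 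In the equivalent form recalled in Section~\ref{preliminary} (valid in any unitary coframe, in particular at $p$) one has $\sum_\a \frac{c_\a}{C^\a_n} S_\a\big((\chi|1)^{-1}\big) < 1/\psi$, so by continuity on the compact manifold $M$ together with the Lipschitz dependence of $\sum_\a \frac{c_\a}{C^\a_n} S_\a$ on its arguments there is $\delta_0 > 0$ with $\sum_\a \frac{c_\a}{C^\a_n} S_\a(s, \nu') \leq 1/\psi - \delta_0$ whenever $0 \leq s \leq \delta_0$ and $\nu'$ is the eigenvalue vector of $(\chi|1)^{-1}$. Take $N \geq 1/\delta_0$, so $\mu_1 \leq \delta_0$, and introduce the comparison matrix $\tilde X$ defined by $\tilde X_{1\bar 1} = \lambda_1$, $\tilde X_{i\bar j} = \chi_{i\bar j}$ for $i, j \geq 2$, and $\tilde X_{1\bar j} = \tilde X_{j\bar 1} = 0$ for $j \geq 2$: it is positive definite (its lower block is $(\chi|1)$, a principal submatrix of $\chi > 0$), its diagonal is $(\lambda_1, \chi_{2\bar 2}, \dots, \chi_{n\bar n})$, and its eigenvalues are $\lambda_1$ together with those of $(\chi|1)$, so that $F(\tilde X) = -\sum_\a \frac{c_\a}{C^\a_n} S_\a(\mu_1, \nu') \geq -1/\psi + \delta_0$, where $F$ is viewed as the function $Y \mapsto -\sum_\a \frac{c_\a}{C^\a_n} S_\a(Y^{-1})$ on positive Hermitian matrices. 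Since $F$ is concave (equivalently $\sum_\a \frac{c_\a}{C^\a_n} S_\a(Y^{-1})$ is convex in $Y > 0$; this is the concavity underlying \eqref{int-formula-S-double-derivative}, see \cite{GLZ}) and $F^{i\bar j}$ is diagonal at $p$, the supporting hyperplane inequality and $F(X) = -1/\psi$ give
\begin{equation*}
	-\frac{1}{\psi} + \delta_0 \leq F(\tilde X) \leq F(X) + \sum_i F^{i\bar i}\big((\tilde X)_{i\bar i} - X_{i\bar i}\big) = -\frac{1}{\psi} + \sum_{i\geq 2} F^{i\bar i}(\chi_{i\bar i} - \lambda_i),
\end{equation*}
hence $\sum_{i\geq 2} F^{i\bar i}(\chi_{i\bar i} - \lambda_i) \geq \delta_0$. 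Combined with $F^{1\bar 1}(\chi_{1\bar 1} - \lambda_1) \geq -F^{1\bar 1}\lambda_1 \geq -\delta_0/2$ (true for $N \geq 2C_1/\delta_0$), this yields $\sum_i F^{i\bar i} u_{i\bar i} = -\sum_i F^{i\bar i}(\chi_{i\bar i} - \lambda_i) \leq -\delta_0/2$, and since $1 + \sum_i F^{i\bar i} \leq 1 + nC_1$ this is $\leq -\theta(1 + \sum_i F^{i\bar i})$ with $\theta = \delta_0/(2 + 2nC_1)$. Taking $\theta$ to be the smaller of the two values produced and $N$ the largest of the thresholds completes the argument.

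The step I expect to be the main obstacle is the choice of the comparison matrix $\tilde X$ in the second regime. The naive choice $\tilde X = \chi$ together with concavity only yields $\sum_i F^{i\bar i}(\chi_{i\bar i} - X_{i\bar i}) \geq F(\chi) + 1/\psi$, and the right-hand side can be negative, since $\chi$ itself need not satisfy $F(\chi) \geq -1/\psi$ — the cone condition controls only the minors $(\chi|k)$. Keeping the large eigenvalue $\lambda_1$ intact while replacing the remaining block by the minor $(\chi|1)$ is precisely what turns the minor-form cone condition into a usable estimate, and one must check simultaneously that (i) $\tilde X$ stays positive, (ii) the diagonal entries of $\tilde X$ reproduce $\chi_{i\bar i}$ so that the concavity pairing produces exactly $\sum_{i\geq 2} F^{i\bar i}(\chi_{i\bar i} - \lambda_i)$, and (iii) the leftover term $F^{1\bar 1}\lambda_1$ is negligible — the last point using the a priori lower bound $\lambda_n \geq c_0$, which is why the case split is genuinely necessary rather than cosmetic.
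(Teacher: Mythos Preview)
Your argument is correct, and the overall two-regime split (small $\lambda_n$ versus $\lambda_n\geq c_0$) coincides with the paper's. One cosmetic slip: $w=\mathrm{tr}_gX=\sum_i\lambda_i$, so ``$w>N$ means $\lambda_1>N$'' should read $\lambda_1\geq w/n>N/n$; this only changes the labels of the thresholds.

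Where the two proofs differ is in the second regime. The paper proceeds algebraically, following \cite{FLM11,CollinsSzekelyhidi2014a}: it expands $\sum_i F^{i\bar i}u_{i\bar i}$ using $S_{\a;1}$ and $S_{\a-1;1i}$ to isolate the quantity $\sum_\a\frac{c_\a}{C^\a_n}S_\a((\chi|1)^{-1})$, invokes the cone condition to produce a fixed gap $-\sigma/\psi$, and then shows the remainder is $O(X^{1\bar 1})$ using the bound $X^{i\bar i}\leq 2n/\epsilon$. You instead use the concavity of $F(Y)=-\sum_\a\frac{c_\a}{C^\a_n}S_\a(Y^{-1})$ on positive matrices together with a well-chosen comparison matrix $\tilde X$; this is exactly the ``subsolution/supporting hyperplane'' mechanism of Guan~\cite{Guan2014a} and Sz\'ekelyhidi~\cite{Szekelyhidi2014b}, which the paper cites as an alternative source of the lemma but does not follow. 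Your route is more conceptual and transfers verbatim to any concave operator for which the cone condition supplies a strict upper bound on the minors, at the cost of needing the global concavity of $F$ (which follows from the concavity of $(S_n/S_{n-\a})^{1/\a}$, so your reference to \cite{GLZ} via \eqref{int-formula-S-double-derivative} is a bit oblique but the fact is standard). The paper's explicit computation, by contrast, never appeals to full matrix concavity and gives slightly more precise control of the error term. Both arguments yield the same lemma with comparable constants.
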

\begin{proof}
Without loss of generality, we may assume that $X_{1\bar 1} \geq \cdots \geq X_{n\bar n}$. By direct calculation,
\begin{equation}
	\sum_i S_{\a - 1;i} (X^{i\bar i})^2 u_{i\bar i}
	\leq \a S_\a - \epsilon \sum_i S_{\a - 1;i} (X^{i\bar i})^2 \\
	\leq \Big(1 - \frac{\epsilon}{n}S_1\Big) \a S_\a ,
\end{equation}
which means that if the inequality~\eqref{lemma-alternative-inequality} does not hold for any $\theta > 0$,
\begin{equation}
	\sum_i X^{i\bar i} \leq \frac{2 n}{\epsilon}.
\end{equation}
Then we have
\begin{equation}
	X_{1\bar 1} \geq \cdots \geq X_{n\bar n} \geq \frac{\epsilon}{2 n}.
\end{equation}

Now we follow the argument in \cite{CollinsSzekelyhidi2014a}
\begin{equation}
\begin{aligned}
	 \sum_i  F^{i\bar i}  u_{i\bar i} 
	&\leq \sum^n_{\a = 1} \frac{c_\a}{C^\a_n} \sum_i S_{\a - 1;i} (X^{i\bar i})^2 X_{i\bar i} - \sum^n_{\a = 1} \frac{c_\a}{C^\a_n} \sum_i S_{\a - 1;1i} (X^{i\bar i})^2 \chi_{i\bar i} \\
	&\leq  \sum^n_{\a = 1} \frac{(\a - 1) c_\a}{C^\a_n} (S_{\alpha} - S_{\alpha;1}) - \sum^n_{\a = 1} \frac{c_\a}{C^\a_n} (S_{\a ;1} - S_{\a }((\chi|1)^{-1})) .
\end{aligned}
\end{equation}
 Then by the condition~\eqref{int-gcmate-cone-condition-1} there is a constant $\sigma > 0$ such that
\begin{equation}
\begin{aligned}
	\sum^n_{\a = 1} \frac{c_\a}{C^\a_n} \sum_i S_{\a - 1;i} (X^{i\bar i})^2 u_{i\bar i} &\leq
	- \frac{\sigma}{\psi} + \sum^n_{\a = 1} \frac{\alpha c_\a}{C^\a_n} (S_\a - S_{\a;1} ) \\
	&= - \frac{\sigma}{\psi} + \sum^n_{\a = 1} \frac{\alpha c_\a}{C^\a_n} X^{1\bar 1} S_{\a - 1;1} \\
	&\leq
	 - \frac{\sigma}{\psi} + \sum^n_{\a = 1} \frac{\alpha^2 c_\alpha}{n} \left(\frac{2 n}{\epsilon}\right)^{\a - 1} X^{1\bar 1} .
\end{aligned}
\end{equation}
So if $X_{1\bar 1}$ is large enough, we achieve the inequality~\eqref{lemma-alternative-inequality}.

\end{proof}

\begin{proposition}
 Let $u\in C^4(M)$ be a solution to equation \eqref{int-gcmate-equation} and $w = \Delta_\omega u + tr_\omega\chi $. Then there are uniform positive constants $C$ and $A$ such that
\begin{equation}
\label{C2-1}
\sup_{M} w \leq C e^{A (u - \inf_M u)},
\end{equation}
where $C$, $A$ depend on the given geometric quantities.
\end{proposition}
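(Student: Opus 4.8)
The plan is to run the classical Cheng–Yau / Aubin maximum‑principle argument for the complex‑Hessian‑type operator, applied to the test function
\[
	\Phi = \log w - A(u - \inf_M u)
\]
for a large constant $A$ to be chosen. I would work at a point $p$ where $\Phi$ attains its maximum, choose normal coordinates for $g$ at $p$ and simultaneously diagonalize $X_{i\bar i}$, so that the formulas in Section~\ref{preliminary} apply: $F^{i\bar i}$ is diagonal, $\sum_i F^{i\bar i}X_{i\bar i}\le 1/\psi$, $\sum_i F^{i\bar i}\ge S_1/(n\psi)$, and the differentiated‑equation inequalities \eqref{formula-equation-partial}--\eqref{formula-equation-double-derivative} are available. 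The goal is to show that if $w(p)$ is large then the operator $\sum_i F^{i\bar i}\bar\partial_i\partial_i$ applied to $\Phi$ is strictly positive at $p$, contradicting the maximality of $\Phi$, and hence bounding $w(p)$; since $\Phi(p)$ dominates $\Phi$ everywhere, this yields \eqref{C2-1}.

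The main steps, in order: (1) Compute $\sum_i F^{i\bar i}\partial_i\bar\partial_i(\log w)$. Using $w = \sum_k X_{k\bar k} = \Delta_\omega u + \mathrm{tr}_\omega\chi$ and commuting covariant derivatives introduces curvature terms of $g$ and of $\chi$, and also third‑order terms $X_{k\bar k i}$; one commutes to relate $\sum_k X_{k\bar k i \bar i}$ to $\sum_k X_{i\bar i k\bar k}$ up to curvature and up to torsion terms of the Hermitian metric $\omega$ (this is where the Hermitian, non‑Kähler, nature costs extra first‑derivative terms, controlled à la Tosatti–Weinkove). (2) Feed in \eqref{formula-equation-double-derivative}: the ``good'' third‑order term $\sum_{i,j,l} F^{i\bar i} X^{j\bar j}|X_{i\bar j l}|^2$ appears with a favorable sign and should absorb, via the Cauchy–Schwarz trick of Phong–Sturm / Tosatti–Weinkove, the bad term $-|\partial w|^2/w^2$ coming from differentiating $\log w$, at least after using $\partial_i\Phi = 0$ at $p$ to substitute $\partial_i w / w = A\,\partial_i u$. (3) Compute $-A\sum_i F^{i\bar i}\partial_i\bar\partial_i u = -A\sum_i F^{i\bar i}(X_{i\bar i}-\chi_{i\bar i})$; by Lemma~\ref{lemma-alternative}, \emph{if $w(p)>N$} this is bounded below by $A\theta(\sum_i F^{i\bar i}+1) - A/\psi$, i.e.\ it produces a large positive multiple of $\sum_i F^{i\bar i}+1$. (4) Collect: after the Cauchy–Schwarz absorption, every remaining error term is bounded by $C(1+\sum_i F^{i\bar i})$ (using $\sum_i F^{i\bar i}\ge S_1/(n\psi)$ and $X^{i\bar i}\le$ something when the corresponding $X_{i\bar i}$ is not too small, plus crude bounds $|X_{i\bar j l}|\le\ldots$ handled by the good term), possibly times small factors times $w$ that are beaten by choosing $A$ then the threshold on $w$ large. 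Choosing $A$ large enough makes the $A\theta\sum_i F^{i\bar i}$ term dominate all errors, giving $0\ge \sum_i F^{i\bar i}\partial_i\bar\partial_i\Phi(p) >0$, a contradiction unless $w(p)\le N$; either way $w(p)\le C$.

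The hard part will be Step~2: bookkeeping the third‑order terms so that the single good term $\sum_{i,j}F^{i\bar i}X^{j\bar j}|X_{i\bar j l}|^2$, summed over $l$, genuinely controls the bad gradient term $|\partial w|^2/w^2 = A^2|\partial u|^2$ after the substitution at the critical point — this requires the standard but delicate splitting into indices $i=j$ versus $i\ne j$ and a careful use of Cauchy–Schwarz with a small parameter, exactly as in Phong–Sturm \cite{PhongSturm10,PhongSturm12} and Blocki \cite{Blocki09a}; one must also be sure the Hermitian torsion terms of $\omega$ appearing from commutators are of the form absorbable by the good term plus $C(1+\sum F^{i\bar i})$. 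A secondary technical point is verifying that the ``third order'' curvature contributions of the background metric $\chi$ (arising because $X = \chi + \sqrt{-1}\partial\bar\partial u$ and $\chi$ is only assumed Hermitian) are likewise controlled; these are bounded in terms of $\|\chi\|_{C^2}$, $w$ and $\sum_i F^{i\bar i}$ and cause no essential trouble. Once these estimates are in hand, the choice of constants $A$ and the threshold for $w$ is routine.
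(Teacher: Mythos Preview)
Your overall strategy---maximum principle on $\log w$ plus a multiple of $u$, feeding in the differentiated equation \eqref{formula-equation-double-derivative}, and invoking Lemma~\ref{lemma-alternative}---is the same as the paper's. However, the simple test function $\Phi = \log w - A(u-\inf_M u)$ is not enough in the Hermitian setting, and the gap is precisely at the ``secondary technical point'' you dismiss as causing no essential trouble.

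After the good third-order term $\sum_{i,j,l} F^{i\bar i}X^{j\bar j}|X_{i\bar j l}|^2$ absorbs $\sum_i F^{i\bar i}|\partial_i w|^2/w$ (this part is standard and works as you say), the torsion $\hat T$ of the Hermitian metric $\chi$ leaves a residual term
\[
-\frac{2}{w}\sum_{i,j} F^{i\bar i}\,\mathfrak{Re}\Big\{\sum_k \hat T^k_{ij}\chi_{k\bar j}\,\bar\partial_i w\Big\},
\]
as in inequality~\eqref{C2-formula-sum-1}, imported from \cite{Sun2013e}. Substituting your critical-point relation $\bar\partial_i w/w = A\,\bar u_i$, this becomes a term of order $A\sum_i F^{i\bar i}|u_i|$, hence of order $A|\nabla u|\sum_i F^{i\bar i}$. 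Lemma~\ref{lemma-alternative} supplies only $A\theta(\sum_i F^{i\bar i}+1)$ on the favorable side, so without an a~priori gradient bound (which you do not have at this stage) the torsion term swamps it and no contradiction results. You cannot reuse the good third-order term here: it has already been spent, essentially tightly, on the $|\partial w|^2/w^2$ absorption.

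The paper repairs this with the Phong--Sturm device: take $\phi = -Au + E_1$ with $E_1 = (u-\inf_M u + 1)^{-1}$. The extra piece contributes $+2E_1^3 \sum_i F^{i\bar i}|u_i|^2$ to $\sum_i F^{i\bar i}\bar\partial_i\partial_i\phi$, and this positive gradient term is exactly what absorbs the torsion residual via Cauchy--Schwarz, at the price of an error $\frac{C A^2}{wE_1^3}\sum_i F^{i\bar i}$. One then runs a dichotomy: either $w > A E_1^{-3/2}$, in which case that error is $\le C_3 w\sum_i F^{i\bar i}$ and Lemma~\ref{lemma-alternative} with $A\theta > C_2+C_3$ gives the contradiction; or $w \le A E_1^{-3/2}$ at the maximum point, which directly yields $w e^{\phi}\le A e^2 e^{-A\inf_M u}$ and hence \eqref{C2-1}. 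Your proposal is missing both the auxiliary $E_1$ term and this two-case split; as written, Step~(4) would not close.
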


\begin{proof}
The proof follows the argument in \cite{Sun2013e} closely, so we give a sketch here. Let us consider the function $ \ln w + \phi$ where 
\begin{equation}
\label{C2-test-function}
    	\phi := - A u + \frac{1}{u - \inf_M u + 1} = - A u + E_1
\end{equation}
by a trick due to Phong and Sturm \cite{PhongSturm10}.
Without loss of generality, we assume $C, A \gg 1$ throughout this section. We may also assume that $w \gg 1$. 

Suppose that $\ln w + \phi$ attains its maximal value at some point $p\in M$. Choose a local chart near $p$ such that $g_{i\bar j} = \delta_{ij}$ and $X_{i\bar j}$ is diagonal at $p$. 
%
As in \cite{Cherrier87, TWv11, Sun2013e}, direct calculation shows that
\begin{equation}
\label{C2-formula-sum-1}
\begin{aligned}
	&\, \sum_{i,j,l} S_{\a - 1;i} (X^{i\bar i})^2 X^{j\bar j} X_{j\bar i\bar l} X_{i\bar j l} - \sum_{i,l} S_{\a - 1;i} (X^{i\bar i})^2 X_{i\bar il\bar l} \\
	\geq&\, w \sum_i S_{\a - 1;i} (X^{i\bar i})^2 \bpartial_i\p_i \phi - \frac{2}{w} \sum_{i,j} S_{\a - 1;i} (X^{i\bar i})^2 \mathfrak{Re} \{ \sum_k \hat{T}^k_{ij} \chi_{k\bar j}{\bpartial_i w}\} \\
	&\, - \a C_1 S_\a - C_2 w \sum_i S_{\a - 1} (X^{i\bar i})^2
\end{aligned}
\end{equation}
where $\hat{T}$ denotes the torsion with respect to the Hermitian metric $\chi$.
%
%
%
It is easy to see that
\begin{equation}
\label{C2-test-function-first-derivative}
	\p_i \phi = - (A + E^2_1) u_i
\end{equation}
and
\begin{equation}
\label{C2-test-function-second-derivative}
\begin{aligned}
    	\bpartial_i \p_i \phi = - (A + E^2_1) u_{i\bar i} + 2 E^3_1 |u_i|^2.
\end{aligned}
\end{equation}
Then, we have
\begin{equation}
\begin{aligned}
     	 &\quad w \sum_i S_{\a -1;i} (X^{i\bar i})^2 \bpartial_i\p_i \phi \\
	 &= 
     	2 E^3_1 w  \sum_i S_{\a -1;i} (X^{i\bar i})^2 |u_i|^2 - ( A + E^2_1 ) w \sum_i S_{\a -1;i} (X^{i\bar i})^2 u_{i\bar i}  ,
\end{aligned}
\end{equation}
and
\begin{equation}
\begin{aligned}
	&\,  2  \sum_{i,j} S_{\a - 1;i} (X^{i\bar i})^2 \mathfrak{Re}\Big\{\sum_k \hat{T}^k_{ij} \chi_{k\bar j} {\bpartial_i \phi}\Big\} \\
    	\geq&\, - w E^3_1 \sum_i S_{\a -1;i} (X^{i\bar i})^2 |u_i|^2 - \frac{C_3 A^2}{w E^3_1} \sum_i S_{\a -1;i} (X^{i\bar i})^2 .
\end{aligned}
\end{equation}
Therefore,
\begin{equation}
\label{C2-formula-sum-3}
	 C_2 w \sum_i F^{i\bar i}   + C_4  \geq - ( A + E^2_1 ) w  \sum_i F^{i\bar i}  u_{i\bar i} - \frac{C_3 A^2}{w E^3_1} \sum_i F^{i\bar i}   .
\end{equation}

For $A \gg 1$ which is to be determined later, there are two cases in consideration:
(1) $w > A E^{-\frac{3}{2}}_1 \geq A > N$, where $N$ is the crucial constant in Lemma \ref{lemma-alternative};
(2) $w \leq A E^{-\frac{3}{2}}_1$ .

In the first case, by Lemma \ref{lemma-alternative},
\begin{equation}
    	(C_2 + C_3) w \sum_i F^{i\bar i}  + C_4 \geq   A w \theta \Big(\sum_i F^{i\bar i}  + 1\Big) .
\end{equation}
This gives a bound $w \leq 1$ at $p$ if we choose $A\theta > \max\{C_2 + C_3, C_4\} $. It contradicts the assumption $w \gg 1$.

In the second case,
\begin{equation}
\begin{aligned}
    w e^\phi \leq w e^\phi |_p \leq A E^{-\frac{3}{2}}_1 e^{ - A u + 1} |_p \leq A e^2 e^{- A  \inf_M u }
\end{aligned}
\end{equation}
and hence
\begin{equation}
\begin{aligned}
    w \leq A e^2 e^{ A u - E_1 - A  \inf_M u } \leq A e^2 e^{ A u - A  \inf_M u} \leq C e^{A (u - \inf_M u)} .
\end{aligned}
\end{equation}

\end{proof}

\bigskip

\section{The gradient estimate}
\label{gradient}
\setcounter{equation}{0}
\medskip

In this section, we provide a direct gradient estimate.
\begin{proposition}
 Let $u\in C^3(M)$ be a solution to equation \eqref{int-gcmate-equation}. Then there are uniform positive constants $C$ and $A$ such that
\begin{equation}
\label{gradient-1}
\sup_{M} |\nabla u|^2 \leq C e^{A (u - \inf_M u)},
\end{equation}
where $C$, $A$ depend on the given geometric quantities.
\end{proposition}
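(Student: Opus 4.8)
The plan is to run a maximum-principle argument on the quantity $Q := \ln |\nabla u|^2 + \phi$, where $|\nabla u|^2 = g^{i\bar j} u_i u_{\bar j}$ and $\phi = -A u + E_1$ with $E_1 = (u - \inf_M u + 1)^{-1}$, exactly the auxiliary function of Phong--Sturm used in the $C^2$ estimate of the previous section. Assuming $Q$ attains its maximum at a point $p$ with $|\nabla u|^2 \gg 1$, I choose a chart at $p$ with $g_{i\bar j} = \delta_{ij}$ and $X_{i\bar j}$ diagonal, and apply the linearized operator $L(\cdot) := \sum_i F^{i\bar i} \bpartial_i\p_i(\cdot)$ to $Q$. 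At $p$ one has $\p_i Q = 0$, i.e. $\p_i |\nabla u|^2 / |\nabla u|^2 = -\p_i \phi = (A + E_1^2) u_i$, and $L(Q) \le 0$.

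The key computation is to expand $L(\ln |\nabla u|^2)$. Writing $|\nabla u|^2 = \sum_k u_k u_{\bar k}$ and commuting covariant derivatives (introducing curvature of $g$ and the torsion $\hat T$ of $\chi$), the dominant good term is $\sum_{i,k} F^{i\bar i}(|u_{ki}|^2 + |u_{k\bar i}|^2)/|\nabla u|^2$, which controls $\sum_i F^{i\bar i} |u_{i\bar i}|^2/|\nabla u|^2$ and, after using the critical equation $\p_i |\nabla u|^2 = (A+E_1^2)|\nabla u|^2 u_i$, dominates the first-derivative error terms $\sum_i F^{i\bar i}|\p_i |\nabla u|^2|^2/|\nabla u|^4 \sim (A+E_1^2)^2 \sum_i F^{i\bar i}|u_i|^2$ up to a controllable remainder. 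The third-order terms $\sum_k F^{i\bar i} u_{k} \p_i\p_{\bar i} u_{\bar k}$ are handled by differentiating the equation once, via formula~\eqref{formula-equation-partial}: $\sum_i F^{i\bar i} X_{i\bar i l} = -\p_l(1/\psi)$, which after relating $X_{i\bar i l}$ to $u_{i\bar i l}$ plus $\chi$-derivatives yields $L(\ln|\nabla u|^2) \ge -C(A + E_1^2) \,\mathfrak{Re}\big(\textstyle\sum_i F^{i\bar i} \cdot (\text{bounded})\big) - C\sum_i F^{i\bar i} - C\sum_i F^{i\bar i}|u_{i\bar i}|\cdot(\cdots)$. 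Combining with $L(\phi) = -(A+E_1^2)\sum_i F^{i\bar i} u_{i\bar i} + 2E_1^3\sum_i F^{i\bar i}|u_i|^2$ from \eqref{C2-test-function-second-derivative}, the positive term $2E_1^3 \sum_i F^{i\bar i}|u_i|^2$ is what absorbs the gradient error terms after completing the square, just as in the $C^2$ proof.

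Finally I split into two cases depending on the size of $w = \Delta_\omega u + \mathrm{tr}_\omega\chi$ at $p$: in the regime where $w > N$, Lemma~\ref{lemma-alternative} gives $\sum_i F^{i\bar i} u_{i\bar i} \le -\theta(\sum_i F^{i\bar i} + 1)$, so the term $-(A+E_1^2)\sum_i F^{i\bar i}u_{i\bar i}$ in $L(\phi)$ produces a large positive multiple of $A(\sum_i F^{i\bar i}+1)$, which for $A$ chosen large (using also $\sum_i F^{i\bar i} \ge S_1/(n\psi)$ and the lower bound $\sum_i F^{i\bar i} X_{i\bar i}\ge 1/\psi$ from Section~\ref{preliminary}) dominates all remaining terms and forces $|\nabla u|^2 \lesssim 1$ at $p$, a contradiction; in the regime $w \le N$, the metric $X$ is uniformly bounded above, hence $F^{i\bar i}$ and the torsion/curvature coefficients are all under control and a standard Cherrier--type estimate bounds $L(\ln|\nabla u|^2)$ from below by $-C\sum_i F^{i\bar i} - C(A+E_1^2)^2 \sum_i F^{i\bar i}|u_i|^2 + (\text{good quadratic terms})$, again closed off by the $E_1^3$ term and the choice of $A$, yielding $|\nabla u|^2 e^{\phi}|_p$ bounded and therefore \eqref{gradient-1}. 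The main obstacle is the third-order terms: controlling $\sum_i F^{i\bar i} u_k \p_i\p_{\bar i}u_{\bar k}$ requires carefully differentiating the fully nonlinear equation and exploiting the critical-point relation to trade third derivatives against the second-order quantity $w$, and keeping track of the torsion terms of $\chi$ on the Hermitian (non-Kähler) manifold so that they are absorbed rather than accumulated.
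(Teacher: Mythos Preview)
Your overall architecture---the Phong--Sturm test function $\ln|\nabla u|^2 + \phi$ with $\phi = -Au + E_1$, the linearized operator $L = \sum_i F^{i\bar i}\bpartial_i\p_i$, differentiating the equation once to kill the third-order terms, and the two-case split via Lemma~\ref{lemma-alternative}---matches the paper's proof exactly. However, your treatment of the term $\sum_i F^{i\bar i}|\p_i(|\nabla u|^2)|^2/|\nabla u|^4$ and the endgame in the case $w>N$ contain real gaps.

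First, the ``good quadratic terms'' $\sum_{i,k}F^{i\bar i}(|u_{ki}|^2+|u_{k\bar i}|^2)/|\nabla u|^2$ do \emph{not} dominate $\sum_i F^{i\bar i}|\p_i(|\nabla u|^2)|^2/|\nabla u|^4$ by naive Cauchy--Schwarz: one loses a factor of $2$. The paper instead uses the B\l ocki-type identity \eqref{gradient-inq-5}, writing $\sum_k u_{ki}u_{\bar k} = \p_i(|\nabla u|^2) - X_{i\bar i}u_i + (\chi\text{-terms})$, which produces \emph{linear} (not quadratic) cross terms $\frac{2(A+E_1^2)}{|\nabla u|^2}\sum_i F^{i\bar i}X_{i\bar i}|u_i|^2$; see \eqref{gradient-inq-6}--\eqref{gradient-inq-9}. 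Your asserted error $(A+E_1^2)^2\sum_i F^{i\bar i}|u_i|^2$ could never be absorbed by $2E_1^3\sum_i F^{i\bar i}|u_i|^2$ for large $A$. Second, in the case $w>N$ you cannot conclude $|\nabla u|^2\lesssim 1$ directly: the cross term $\frac{2(A+E_1^2)}{|\nabla u|^2}\sum_i F^{i\bar i}X_{i\bar i}|u_i|^2$ is itself of order $A$ (since $\sum_i F^{i\bar i}X_{i\bar i}\le n/\psi$), so making $A$ large does not beat it. The paper resolves this via a further dichotomy (\eqref{gradient-inq-15-1}--\eqref{gradient-inq-15-2}): either the $E_1^3$ term absorbs the $\chi$-cross term, giving $|\nabla u|^2 \le C(A+1)(u-\inf_M u+1)^3$, or $S_1$ is bounded, whence all $X^{i\bar i}$ are bounded and a separate argument \eqref{gradient-inq-15-4}--\eqref{gradient-inq-17} closes. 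In \emph{both} cases one only obtains $|\nabla u|^2 \le C(u-\inf_M u+1)^3$ at $p$, and the final uniform bound comes from a bootstrap: assuming $\ln|\nabla u|^2+\phi\ge 0$ at $p$ gives $A(u-\inf_M u)\le \ln|\nabla u|^2+1$, and substituting back yields $|\nabla u|^2\le C(\ln|\nabla u|+C)^3$, which self-improves to a bound on $|\nabla u(p)|$. This bootstrap step is missing from your sketch.
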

\begin{proof}
Let us consider the function $ \ln |\nabla u|^2 + \phi$ where $\phi$ is to be specified later.  Suppose that $\ln |\nabla u|^2 + \phi$ attains its maximal value at some point $p\in M$. We may assume that $|\nabla u|^2 \geq 1$. Choose a local chart near $p$ such that $g_{i\bar j} = \delta_{ij}$ and $X_{i\bar j}$ is diagonal at $p$. Therefore we have at the point $p$,
\begin{equation}
\label{gradient-derivative-1}
	\frac{\partial_i(|\nabla u|^2)}{|\nabla u|^2} + \partial_i \phi = 0 ,
\end{equation}
\begin{equation}
\label{gradient-derivative-2}
	\frac{\bar\partial_i(|\nabla u|^2)}{|\nabla u|^2} + \bar\partial_i\phi = 0 ,
\end{equation}
and
\begin{equation}
\label{gradient-derivative-3}
    \frac{\bar\partial_i\partial_i(|\nabla u|^2)}{|\nabla u|^2} - \frac{|\partial_i(|\nabla u|^2)|^2}{|\nabla u|^4} + \bar\partial_i\partial_i\phi \leq 0 .
\end{equation}

Direct calculation shows that
\begin{equation}
\label{gradient-derivative-4}
\begin{aligned}
    \partial_i(|\nabla u|^2) &= \sum_k (u_k u_{\bar k i} + u_{ki}u_{\bar k}) , 
\end{aligned}
\end{equation}
and
\begin{equation}
\label{gradient-derivative-5}
\begin{aligned}
    \bar\partial_i\partial_i(|\nabla u|^2) &= \sum_k   (u_{k\bar i}u_{\bar k i} + u_{ki}u_{\bar k\bar i} + u_{ki\bar i}u_{\bar k} + u_k u_{\bar k i\bar i})\\
    &= \sum_k |u_{k i}|^2 + \sum_k \Big|u_{\bar k i} - \sum_l T^k_{il} u_{\bar l}\Big|^2 + 2 \sum_k \mathfrak{Re}\Big\{X_{i\bar i k} u_{\bar k}\Big\}  \\
    &\quad - 2 \sum_k \mathfrak{Re}\Big\{\chi_{i\bar i k} u_{\bar k} \Big\} + \sum_k R_{i\bar i k\bar l} u_l u_{\bar k} - \sum_k \Big|\sum_l T^k_{il} u_{\bar l}\Big|^2 .
\end{aligned}
\end{equation}  
Substituting \eqref{gradient-derivative-5} into \eqref{gradient-derivative-3},
\begin{equation}
\label{gradient-derivative-6}
\begin{aligned}
	&\quad |\nabla u|^2 \sum_k |u_{k i}|^2  - |\partial_i(|\nabla u|^2)|^2 +|\nabla u|^4 \bar\partial_i\partial_i\phi\\
	&\leq - 2 |\nabla u|^2 \sum_k \mathfrak{Re}\Big\{X_{i\bar i k} u_{\bar k}\Big\} + 2 |\nabla u|^2 \sum_k \mathfrak{Re}\Big\{\chi_{i\bar i k} u_{\bar k} \Big\} \\
	&\quad - |\nabla u|^2 \sum_k R_{i\bar i k\bar l} u_l u_{\bar k} + |\nabla u|^2 \sum_k \Big|\sum_l T^k_{il} u_{\bar l}\Big|^2  \\
	&\leq - 2 |\nabla u|^2 \sum_k \mathfrak{Re}\Big\{X_{i\bar i k} u_{\bar k}\Big\} + C_1 |\nabla u|^4  .
\end{aligned}
\end{equation}


By Schwarz inequality,
\begin{equation}
\label{gradient-inq-2}
	\sum_i F^{i\bar i}  \Big|\sum_k u_{ki} u_{\bar k}\Big|^2 \leq |\nabla u|^2   \sum_i F^{i\bar i}  \sum_{k}|u_{ki}|^2  .
\end{equation}
By \eqref{gradient-derivative-4},
\begin{equation}
\label{gradient-inq-5}
\begin{aligned}
	\Big|\sum_k u_{ki} u_{\bar k}\Big|^2 
	&= \Big|\partial_i(|\nabla u|^2) - \sum_k u_k u_{\bar k i}\Big|^2 \\
	&= |\partial_i(|\nabla u|^2) |^2  +  \Big|\sum_k u_k u_{\bar k i}\Big|^2  \\
	&\quad - 2  X_{i \bar i} \mathfrak{Re} \Big\{ \partial_i(|\nabla u|^2) u_{\bar i} \Big\} + 2 \mathfrak{Re}\Big\{  \partial_i(|\nabla u|^2) \sum_k \chi_{k \bar i} u_{\bar k} \Big\} .
\end{aligned}
\end{equation}
Substituting \eqref{gradient-inq-5} and \eqref{gradient-derivative-1} into \eqref{gradient-inq-2},
\begin{equation}
\label{gradient-inq-6}
\begin{aligned}
	&\quad \sum_i F^{i\bar i}  \Big(|\nabla u|^2 \sum_{k}|u_{ki}|^2 - |\partial_i(|\nabla u|^2) |^2 \Big)\\
	&\geq 2 |\nabla u|^2  \sum_i F^{i\bar i} X_{i\bar i} \mathfrak{Re}\Big\{\partial_i\phi u_{\bar i}\Big\} - 2 |\nabla u|^2 \ \sum_i F^{i\bar i}  \mathfrak{Re}\Big\{\partial_i \phi \sum_k \chi_{k \bar i} u_{\bar k}\Big\} .
\end{aligned}
\end{equation}
Applying \eqref{gradient-derivative-6}, we obtain
\begin{equation}
\label{gradient-inq-8}
\begin{aligned}
	\frac{C_2}{ |\nabla u|}  + C_1  \sum_{i} F^{i\bar i}  &\geq \sum_i F^{i\bar i}  \bar\partial_i\partial_i\phi + \frac{2}{ |\nabla u|^2} \sum_i F^{i\bar i}  X_{i\bar i} \mathfrak{Re}\Big\{\partial_i\phi u_{\bar i}\Big\} \\
	&\quad - \frac{2}{ |\nabla u|^2} \sum_i F^{i\bar i}  \mathfrak{Re}\Big\{\partial_i \phi \sum_k \chi_{k \bar i} u_{\bar k}\Big\} .
\end{aligned}
\end{equation}

Set
\[
	\phi:= - A (u - \inf_M u) + \frac{1}{u - \inf_M u + 1} = - A(u - \inf_M u) + E_1 .
\]
Then
\begin{equation}
	\partial_i \phi = - (A + E^2_1) u_i ,
\end{equation}
and
\begin{equation}
		\bar\partial_i \partial_i \phi = - (A + E^2_1) u_{i\bar i} + 2 E^3_1 |u_i |^2 .
\end{equation}
So it follows from \eqref{gradient-inq-8},
\begin{equation}
\label{gradient-inq-9}
\begin{aligned}
	&\quad \frac{C_2}{ |\nabla u|}  + C_1 \sum_{i} F^{i\bar i}  + \frac{2 (A + E^2_1)}{ |\nabla u|^2}  \sum_i F^{i\bar i}  X_{i\bar i} |u_i|^2 \\
	&\quad - \frac{2 (A + E^2_1)}{|\nabla u|^2} \sum_i F^{i\bar i}  \mathfrak{Re} \Big\{ u_i \sum_k \chi_{k \bar i} u_{\bar k} \Big\} \\
	&\geq  - (A + E^2_1)  \sum_i F^{i\bar i}  u_{i\bar i} + 2 E^3_1 \sum_i F^{i\bar i}  |u_i |^2 .
\end{aligned}
\end{equation}

There are two cases: (1) $w \leq N$ and  (2) $w > N$.

In the first case, we know that there is $c_1 > 0$ such that $\frac{1}{c_1} \geq X_{i\bar i} \geq c_1$ for $i = 1, \cdots, n$.
So 
for some $\sigma_1 > 0$,
\begin{equation}
	\frac{1}{\sigma_1} \geq F^{i\bar i}  \geq \sigma_1 .
\end{equation}
Therefore
\begin{equation}
\label{gradient-inq-10}
\begin{aligned}
	\frac{C_2}{ |\nabla u|}  +\frac{n C_1}{\sigma_1} + \frac{2 n (A + 1) |\chi|}{\sigma_1} + \frac{( n + 2) (A + 1)}{ \psi }  \\
	\geq  \epsilon (A + E^2_1) \sum_i F^{i\bar i}  + 2 \sigma_1 E^3_1  |\nabla u |^2 ,
\end{aligned}
\end{equation}
and hence 
\begin{equation}
	|\nabla u|^2 \leq C_3 (A + 1) (u - \inf_M u + 1)^3 .
\end{equation}

In the second case,
\begin{equation}
\label{gradient-inq-12}
\begin{aligned}
	&\quad \frac{C_2}{ |\nabla u|}  + C_1 \sum_{i} F^{i\bar i}  + \frac{2 (A + E^2_1)}{ |\nabla u|^2}  \sum_i F^{i\bar i}  X_{i\bar i} |u_i|^2 \\
	&\quad - \frac{2 (A + E^2_1)}{|\nabla u|^2} \sum_i F^{i\bar i}  \mathfrak{Re} \Big\{ u_i \sum_k \chi_{k \bar i} u_{\bar k} \Big\} \\
	&\geq  (A + E^2_1) \theta\Big(  \sum_i F^{i\bar i}  + 1\Big) + 2 E^3_1 \sum_i F^{i\bar i}  |u_i |^2 .
\end{aligned}
\end{equation}
If $A \geq \frac{2(C_1 + C_2)}{ \theta}$,
\begin{equation}
\label{gradient-inq-13}
\begin{aligned}
	 \frac{2 (A + E^2_1)}{ |\nabla u|^2} \sum_iF^{i\bar i}  X_{i\bar i} |u_i|^2 - \frac{2 (A + E^2_1)}{|\nabla u|^2}  \sum_i F^{i\bar i} \mathfrak{Re} \Big\{ u_i \sum_k \chi_{k \bar i} u_{\bar k} \Big\} \\
	\geq  \frac{(A + E^2_1) \theta}{2} \Big( \sum_i F^{i\bar i}  + 1\Big) + 2 E^3_1 \sum_i F^{i\bar i}  |u_i |^2 .
\end{aligned}
\end{equation}
By Schwarz's inequality,
\begin{equation}
\label{gradient-inq-14}
\begin{aligned}
	&\quad \Bigg|\frac{2 (A + E^2_1)}{|\nabla u|^2} \sum_i F^{i\bar i}  \mathfrak{Re} \Big\{ u_i \sum_k \chi_{k \bar i} u_{\bar k} \Big\}\Bigg| \\
	&\leq \frac{(A + E^2_1) \theta}{4}  \sum_i F^{i\bar i}  + C_4  \frac{ (A + E^2_1)}{ |\nabla u|^2} \sum_i F^{i\bar i}  |u_i|^2 ,
\end{aligned}
\end{equation}
and hence
\begin{equation}
\label{gradient-inq-15}
\begin{aligned}
	&\quad \frac{2 (A + E^2_1)}{ |\nabla u|^2}  \sum_i F^{i\bar i} X_{i\bar i} |u_i|^2 +  C_4  \frac{ (A + E^2_1)}{ |\nabla u|^2}  \sum_i F^{i\bar i}  |u_i|^2  \\
	&\geq  \frac{(A + E^2_1) \theta}{4}  \sum_i F^{i\bar i}  + \frac{(A + E^2_1) \theta}{2}  + 2 E^3_1 \sum_i F^{i\bar i}  |u_i |^2 .
\end{aligned}
\end{equation}
Then we have either 
\begin{equation}
\label{gradient-inq-15-1}
	|\nabla u|^2 \leq  \frac{C_4 (A + 1)}{2 } (u - \inf_M u + 1)^3,
\end{equation}
or
\begin{equation}
\label{gradient-inq-15-2}
\frac{ \theta}{8}  \sum_i F^{i\bar i} \leq  \frac{1}{ |\nabla u|^2}  \sum_i F^{i\bar i}  X_{i\bar i} |u_i|^2 .
\end{equation}
In the later case, $S_1 \leq \frac{8 n}{\theta}$, which implies that $X^{i\bar i} < \frac{8 n}{\theta}$  for $i = 1, \cdots, n$.
Back to \eqref{gradient-inq-15}, we control the first term
\begin{equation}
\label{gradient-inq-15-4}
\begin{aligned}
	{\epsilon_1 (A + E^2_1)} \max_i \sum^n_{\a = 1} \frac{c_\a}{C^\a_n} S_{\a - 1;i} + \frac{(A + E^2_1)}{\epsilon_1  |\nabla u|^2} \sum_i F^{i\bar i}  |u_i|^2 +  C_4  \frac{ (A + E^2_1)}{ |\nabla u|^2}  \sum_i F^{i\bar i}  |u_i|^2  \\
	\geq  \frac{(A + E^2_1) \theta}{4} \sum_i F^{i\bar i}  + \frac{(A + E^2_1) \theta}{2} + 2 E^3_1  \sum_i F^{i\bar i} |u_i |^2 .
\end{aligned}
\end{equation}
Setting 
\[
\epsilon_1 = \frac{\theta}{2 \sum^n_{\alpha = 1} \frac{c_\alpha \alpha}{n} \Big(\frac{8 n}{\theta}\Big)^{\alpha - 1}} ,
\]
then
\begin{equation}
\label{gradient-inq-16}
	C_5 \frac{(A + E^2_1)}{ |\nabla u|^2} \sum_i F^{i\bar i}  |u_i|^2 \geq  \frac{(A + E^2_1) \theta}{4}  \sum_i F^{i\bar i}   + 2 E^3_1 \sum_i F^{i\bar i}  |u_i |^2 .
\end{equation}
So we obtain
\begin{equation}
\label{gradient-inq-17}
	|\nabla u|^2 \leq \frac{C_5 (A + 1)}{2} (u - \inf_M u + 1)^3 .
\end{equation}

Therefore, no matter in case (1), case (2) or $|\nabla u|^2 \leq 1$, we have at the point $p$,
\begin{equation}
\label{gradient-inq-18}
	|\nabla u|^2 \leq C (u - \inf_M u + 1)^3 ,
\end{equation}
if $A$ is chosen sufficiently large.
Without loss of generality, we may assume that $\ln |\nabla u|^2 + \phi \geq 0$, otherwise the proof is done. Thus
\begin{equation}
\label{gradient-test-1}
	\ln |\nabla u|^2 + 1 \geq A (u - \inf_M u) .
\end{equation}
Substituting \eqref{gradient-test-1} into \eqref{gradient-inq-18},
\begin{equation}
\label{gradient-test-2}
	|\nabla u|^2 \leq C_6 (\ln |\nabla u| + C_7)^3 ,
\end{equation}
which implies that $|\nabla u (p)|^2$ is bounded by a uniform constant $C_8$. Thus
\begin{equation}
	\ln |\nabla u|^2 + \phi \leq \ln |\nabla u|^2 + \phi \Big|_{x = p} \leq \ln C_8 + \phi (p) ,
\end{equation}
and hence
\begin{equation}
\begin{aligned}
	\ln |\nabla u|^2 
	&\leq A u  - \frac{1}{u - \inf_M u + 1} + \ln C_8 - A u(p)  + \frac{1}{u(p) - \inf_M u + 1} \\
\end{aligned}
\end{equation}

\end{proof}

\bigskip

\section{Solving the equations}
\label{solution}
\setcounter{equation}{0}
\medskip

In this section, we give proofs of the solvability results stated in introduction section.

\begin{proof}[Proof of Theorem~\ref{theorem-hermitian}]
First, we assume that $v$ is smooth. Define $\varphi$ by
\begin{equation}
\label{hermitian-continuity-method-flow}
	\chi^n_v = \varphi \sum^n_{\a = 1} c_\a \chi^{n - \a}_v \wedge \omega^\a.
\end{equation}

We use the continuity method and consider the family of equations
\begin{equation}
	\chi^n_{u_t} = \psi^t \varphi^{1 - t} e^{b_t} \sum^n_{\a = 1} c_\a \chi^{n - \a}_{u_t} \wedge \omega^\a, \qquad \text{for } t \in [0,1],
\end{equation}
where $\chi_{u_t} > 0$ and $b_t$ is a constant for each $t$. We consider the set
\begin{equation}
    \mathcal{T} := \{t'\in[0,1]\;|\; \exists \; u_t \in C^{2,\a}(M) \text{ and } b_t \text{ solving } \eqref{hermitian-continuity-method-flow} \text{ for } t\in[0,t']\}.
\end{equation}

As shown in \cite{Sun2013e}, the continuity method works if we can guarantee: (1) $0 \in \mathcal{T}$; (2) we have uniform $C^\infty$ estimates for all $u_t$. The first requirement is naturally met; for the second requirement, we only need to show that $\psi^t \varphi^{1 - t} e^{b_t} \leq \psi$ for all $t \in [0,1]$. At the maximum point of $u_t - v$, we have
\(
	\psi^t \varphi^{1 - t} e^{b_t} \leq \varphi,
\)
and thus $b_t \leq 0$. Therefore
\(
	\psi^t \varphi^{1 - t} e^{b_t} \leq \psi.
\)

Second, if $v$ is not smooth, we can smoothen $v$ and replace $\psi$ by $(1 + \delta) \psi$ for some very small $\delta > 0$.

\end{proof}

\begin{proof}[Proof of Theorem~\ref{theorem-kahler}]

Define $\varphi$ by
\begin{equation}
	\chi^n = \varphi \sum^n_{\a = 1} c_\a \chi^{n - \a} \wedge \omega^\a.
\end{equation}
Definitely,
\begin{equation}
	n \chi^{n - 1} > \varphi \sum^{n - 1}_{\a = 1} c_\a (n - \a) \chi^{n - \a - 1} \wedge \omega^\a.
\end{equation}
Therefore, we can find a $C^2$ function $h$ satisfying $h(x) \geq \max\{\varphi(x),\psi(x)\}$ for all $x \in M$ and
\begin{equation}
	n \chi^{n - 1} > h \sum^{n - 1}_{\a = 1} c_\a (n - \a) \chi^{n - \a - 1} \wedge \omega^\a.
\end{equation}

Since $h \geq \varphi$, by Theorem~\ref{theorem-hermitian}, there exists a  solution $u_0$ and a constant $b_0 \leq 0$ such that
\begin{equation}
	\chi^n_{u_0} = e^{b_0} h \sum^n_{\a = 1} c_\a \chi^{n - \a}_{u_0} \wedge \omega^\a .
\end{equation}

Now we apply continuity method again from $\chi_{u_0}$ and consider the family of equations
\begin{equation}
\label{kahler-continuity-method-flow-second}
	\chi^n_{u_t} = \psi^t h^{1 - t} e^{b_t} \sum^n_{\a = 1} c_\a \chi^{n - \a}_{u_t} \wedge \omega^\a, \qquad \text{for } t\in [0,1].
\end{equation}
According to the argument in \cite{Sun2013e}, we only need to show that $\psi^t h^{1 - t} e^{b_t} \leq h$ for all $t \in [0,1]$. Note that, we use the new condition~\eqref{kahler-continuity-method-flow-second} to obtain $C^\infty$ estimates.

Integrating equation flow~\eqref{kahler-continuity-method-flow-second},
\begin{equation}
\begin{aligned}
	\int_M \chi^n &= e^{b_t}  \sum^n_{\a = 1} \int_M \psi^t h^{1 - t} c_\a \chi^{n - \a}_{u_t} \wedge \omega^\a \\
	&\geq e^{b_t} c \sum^n_{\a = 1} c_\a \int \chi^{n - \a} \wedge \omega^\a ,
\end{aligned}
\end{equation}
which implies $b_t \leq 0$. Therefore,
\begin{equation}
	\psi^t h^{1 - t} e^{b_t} \leq \psi^t h^{1 - t} \leq h.
\end{equation}

\end{proof}

\bigskip

\noindent
{\bf Acknowledgements}\quad
The author is supported by China Postdoctoral Science Foundation (Grant No. 2015M571478) and National Natural Science Foundation of China (Grant No. 11501119).

\end{document}